\title{Revisiting Relations between Stochastic Ageing and Dependence
\\for Exchangeable Lifetimes\\ with
an Extension for the IFRA/DFRA Property}
\newcommand{\lcaporali}{{\fontencoding{T1}\fontfamily{ptm}\selectfont<<}}
\newcommand{\rcaporali}{{\fontencoding{T1}\fontfamily{ptm}\selectfont>>}}
\newtheorem{theorem}{Theorem}[section]
\newtheorem{definition}[theorem]{Definition}
\newtheorem{example}[theorem]{Example}
\newtheorem{lemma}[theorem]{Lemma}
\newtheorem{proposition}[theorem]{Proposition}
\newtheorem{remark}[theorem]{Remark}
\begin{document}

\author{ Giovanna Nappo\textsuperscript{a}\thanks{Sapienza, Universit\`{a} di Roma,  P.le A.~Moro, 5 - I-00185 Roma, Italy; Email: nappo@mat.uniroma1.it}          and
        Fabio Spizzichino\textsuperscript{b}\thanks{Sapienza, Universit\`{a} di Roma,  P.le A.~Moro,5 - I-00185 Roma, Italy;  Email: fabio.spizzichino@fondazione.uniroma1.it}
  }

\date{$ $}

\maketitle

\begin{abstract}
We first review an approach that had been developed in the past years to
introduce concepts of \textquotedblleft bivariate ageing" for  exchangeable
lifetimes and to analyze mutual relations among stochastic dependence,
univariate ageing, and bivariate ageing.

A specific feature of such an approach dwells on the concept of semi-copula
and in the extension, from copulas to semi-copulas, of properties of
stochastic dependence. In this perspective, we aim to discuss some intricate aspects of conceptual
character and to provide the readers with pertinent remarks from a Bayesian Statistics standpoint. In particular we will discuss the
role of extensions of dependence properties. \textquotedblleft
Archimedean\textquotedblright\ models have an important role in the present framework.

In the second part of the paper, the definitions of Kendall distribution and
of Kendall equivalence classes will be extended to semi-copulas and related
properties will be analyzed. On such a basis, we will consider the notion of
\textquotedblleft Pseudo-Archimedean\textquotedblright\ models and extend to
them the analysis of the relations between the ageing notions of  IFRA/DFRA-type and the
dependence concepts of  PKD/NKD.
\\\\
\textbf{Keywords:} Bivariate ageing;  Semi-copulas; Generalized Kendall Distributions; Positive Kendall Dependence; Pseudo-Archimedean Semi-copulas; Positive Dependence Orderings; Schur-costant Models
\\\\
 \textbf{Mathematical Subject Classification:} 60K10,   60E15, 62E10, 62H05, 60G09, 91B30
\end{abstract}

\newpage
\section{Introduction}\label{sec:intro}

Let $\mathbf{X}\equiv\left( X_{1},...,X_{n}\right) $ be a vector of
non-negative random variables and denote by $\overline{F}_{\mathbf{X}%
}:\mathbb{R}_{+}^{n}\rightarrow\lbrack0,1]$ the \textit{joint survival
function} of $\mathbf{X}$:%
\[
\overline{F}_{\mathbf{X}}\left( x_{1},...,x_{n}\right) :=\mathbb{P}\left(
X_{1}>x_{1},...,X_{n}>x_{n}\right) .
\]

All along this note, if not explicitly mentioned  otherwise, $X_{1}%
,...,X_{n}$ are considered to be exchangeable and by $\overline{G}$ we denote
the one dimensional marginal survival function: for $j=1,...,n$ and $x\geq0$,
\[
\overline{G}\left( x\right) :=\mathbb{P}\left( X_{j}>x\right) .
\]

For simplicity's sake, $\overline{G}\left( \cdot\right) $ will be assumed
strictly decreasing and positive over the half-line $[0,+\infty)$.

When $X_{1},...,X_{n}$ are interpreted as \textit{lifetimes} of different
individuals, attention is often concentrated on the two different phenomena of
\textit{stochastic dependence} and of \textit{stochastic ageing}. A very rich
literature in applied probability has been devoted to this field and, from a
technical viewpoint, we remind that several different notions of dependence
and of ageing have been considered. Typically, such properties are described
in terms of inequalities involving comparisons between probabilities of
different events or conditional probabilities of a fixed event, given two
different information-states. See, e.g., references~\cite{BarPros1975}, \cite{Arjas1981},  \cite{Arjas-Norros1984}, \cite{Arjas-Norros1991},  \cite{Joe1997},     \cite{Spiz01}, \cite{Nelsen2006}, \cite{LaiXie2006},  \cite{ShaSha2007}, \cite{LiLi2013}, \cite{Spiz14}.

Dependence and ageing are strictly related one another and, at a time, they
are heavily affected by the actual state of information about $\left(
X_{1},...,X_{n}\right) $. We look at this circumstance from a Bayesian standpoint. In this respect
the relations between  dependence and ageing are relevant to understand the effects of those changes of information which do not destroy exchangeability.
\\

   Concerning the relations among such notions, in this paper we first review an approach that had been developed in the past years, for the case of exchangeable lifetimes. This approach is based on the role of the family of the level curves of joint survival functions in the description of ageing for inter-dependent lifetimes, from a Bayesian viewpoint. See~\cite{BarSpiz93},
    \cite{BasSpiz01}, \cite{Spiz01}, \cite{BasSpiz05a}. Some of the results and motivations contained in the previous references are  briefly recalled in
Sections~\ref{section:IFR-DFR}, \ref{section:exchangeable}, and \ref{section:sub-super-migrativity-IFR}.
See also~\cite{NapSpiz09} and~\cite{FosSpiz12} for further developments.
In such a context, the specific notion of semi-copula emerges as a rather natural extension of the notion of copula and several derivations hinge on the extension to semi-copulas of concepts of stochastic dependence.
In~\cite{BasSpiz05a} a conceptual method was introduced to single out the appropriate
notions of dependence and bivariate
ageing, \textquotedblleft corresponding"  to a fixed property of
one-dimensional ageing.
One aim of our work amounts to  illustrate this method. Our comments will point out some related  aspects, conceptual in nature, and propose a natural meaning to be given to the above term  \textquotedblleft corresponding" for the special class of dependence properties  defined in terms of Positive Dependence Orderings, see (\ref{C-POD-Pi}) and (\ref{G-B-POD-Pi}).
\\
\\

    In the afore-mentioned approach however some issues, concerning with properties of copulas and semi-copulas, still  require further analysis and developments.
     To this end we will introduce, in particular, a new definition of generalized Kendall distributions for semi-copulas, and, in Section~\ref{section:IFRA-PKD},  we will discuss the role of them
     and of related equivalence classes of semi-copulas. This study will also allow us to clarify intriguing aspects of that approach and to extend the results in~\cite{BasSpiz05a} about the relations among stochastic dependence and IFRA/DFRA-type concepts of ageing from Archimedean models to a larger class of models (see Proposition~\ref{prop:PKD-NAP-SPIZ}).
    \\
    More precisely, the paper has a structure as follows.
    In Section~\ref{section:IFR-DFR}, we concentrate attention on some simple aspects of the basic notions of Increasing Failure Rate (IFR) and Decreasing Failure Rate (DFR) probability distributions and present several related comments under the viewpoint of Subjective Probability and of Bayesian Statistics. Some results concerning relations between such notions and dependence properties of exchangeable lifetimes $X_1,...,X_{n}$ are recalled in Section~\ref{section:exchangeable}. Section~\ref{section:richiami}   is devoted to reviewing necessary notions about bivariate copulas, semi-copulas, dependence properties, Kendall distributions and bivariate ageing functions. We will also introduce the concept of generalized Kendall distributions for semi-copulas.  The relevant case of Archimedean copulas and semi-copulas is treated in details in Subsection~\ref{subsec:archimedean}.
    In  Section~\ref{section:sub-super-migrativity-IFR} we  concentrate attention on the ageing notions of IFR/DFR and review some specific results concerning with relations between them
    and corresponding concepts of positive dependence.
    In  Section~\ref{sec:path} we explain in details a conceptual method that was introduced in~\cite{BasSpiz05a} and that  leads to appropriate extensions of the arguments reviewed in the previous Section~\ref{section:sub-super-migrativity-IFR}. Such extensions concern other notions of dependence and univariate/bivariate ageing. Furthermore we pave the way to the extension, to be developed in Section~\ref{section:IFRA-PKD}, of the results
     about  the relations among stochastic dependence and IFRA/DFRA-type concepts of ageing. On this purpose, we  present definitions, properties, and mathematical results about equivalence classes of semi-copulas, which are defined in terms of generalized Kendall distributions.
     In Section~\ref{sec:conclusions}, we present a short discussion containing some comments, concluding remarks, and open problems.

\section{A brief review about one-dimensional IFR and DFR properties}\label{section:IFR-DFR}

Let $T$ be a non-negative random variable and let the symbol $\overline{G}$ to
be again used to denote the survival function of $T$.

As a basic concept and a paradigmatic notion of ageing we recall that $T$ is
\textit{Increasing Failure Rate} (IFR)\ when, for $t,s\geq0$,
\[
\mathbb{P}\left( T>s+t|T>t\right)
\]
turns out to be a non-increasing function in the argument $t$, for any fixed~$s$.
Similarly, $T$ is \textit{Decreasing Failure Rate} (DFR)  when
\[
\mathbb{P}\left( T>s+t|T>t\right)
\]
is a non-decreasing function of $t$. According to a common language,
the notion of IFR defines a concept of \textit{positive ageing}, whereas DFR
defines a concept of \textit{negative ageing}. If $T$ is
exponentially distributed then it is IFR and DFR, at a time, and the lack of
memory property of univariate exponential distributions is also seen as a
property of\textit{\ no-ageing}.

The arguments in this section will be concentrated on the notions of IFR and
DFR. This will constitute a basis for the brief discussion that will be
presented in the next Section~\ref{section:sub-super-migrativity-IFR}.

In the following result attention is preliminarily focused on different
characterizations of the concept of IFR distribution. The proof is almost
immediate and will be omitted. Details can be found, e.g., in~\cite{Spiz01}.
Before stating it, we recall the notion of Schur-Concavity (Schur-Convexity):
A function $W: \mathbb{R}_+\times\mathbb{R}_+ \rightarrow \mathbb{R}_+$ is Schur-concave (Schur-convex) iff
\begin{equation}\label{def-SCHUR-conc-convex}
\text{ for } 0 \leq x \leq y, \quad t\geq 0, \quad W(x+t,y) \geq (\leq) W(x,y+t).
\end{equation}
It is crucial in our discussione that Schur-Concavity (Schur-Convexity)  for a function $W(x,y)$ is a property of the level curves $\{(x,y)\; s.t.\; W(x,y)=c\}$.  For this and other properties of Schur-concave functions, see, e.g.,~\cite{MarshOlk1979}.

\begin{proposition}\label{prop:IFR-equiv}
The following conditions are equivalent
\begin{enumerate}
\item[(i)] $T$ is \emph{IFR}
\item[(ii)] $\overline{G}\left( \cdot\right) $ is log-concave
\item[(iii)] The   function $ (t_{1},t_{2})\mapsto \overline{G}\left( t_{1}\right)
\cdot$ $\overline{G}\left( t_{2}\right) $ is Schur-concave
\item[(iv)] For two i.i.d.\@ random variables $T_{1},T_{2}$, distributed according to
$\overline{G}\left( \cdot\right) $ one has, for given $s>0$ and for given
$0\leq t_{1}<t_{2}$,%
\begin{equation}
\mathbb{P}\left( T_{1}>t_{1}+s|T_{1}>t_{1},T_{2}>t_{2}\right) \geq
\mathbb{P}\left( T_{2}>t_{2}+s|T_{1}>t_{1},T_{2}>t_{2}\right) .
\label{BivIFR}%
\end{equation}

\end{enumerate}

\end{proposition}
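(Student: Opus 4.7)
The plan is to prove all four equivalences as a star around a single common content: each of (i)--(iv) can be directly rewritten as the inequality
\[
\bar{G}(t_1+s)\,\bar{G}(t_2) \;\geq\; \bar{G}(t_1)\,\bar{G}(t_2+s), \qquad 0 \leq t_1 \leq t_2,\; s \geq 0,
\]
so the real work is just bookkeeping. The standing assumption that $\bar{G}$ is strictly positive on $[0,+\infty)$ guarantees that every statement can safely be passed through a logarithm.

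For (i) $\Leftrightarrow$ (ii), I would begin from $\mathbb{P}(T>t+s\mid T>t)=\bar{G}(t+s)/\bar{G}(t)$. The IFR condition asks that this ratio be non-increasing in $t$ for each fixed $s\geq 0$, which, after taking logs, is equivalent to saying that $\log\bar{G}(t+s)-\log\bar{G}(t)$ is non-increasing in $t$. This is exactly the standard ``non-increasing increments'' characterization of concavity of $\log\bar{G}$, i.e., log-concavity of $\bar{G}$.

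For (ii) $\Leftrightarrow$ (iii), I would exploit the separable form of $W(t_1,t_2):=\bar{G}(t_1)\bar{G}(t_2)$, so that $\log W(t_1,t_2)=\log\bar{G}(t_1)+\log\bar{G}(t_2)$. Applying the definition (\ref{def-SCHUR-conc-convex}) to $W$ (equivalently, to $\log W$, since $\log$ is monotone) yields Schur-concavity of $W$ iff $\log\bar{G}(t_1+s)-\log\bar{G}(t_1)\geq\log\bar{G}(t_2+s)-\log\bar{G}(t_2)$ for $0\leq t_1\leq t_2$ and $s\geq 0$, which is once more the log-concavity condition. Finally, for (iii) $\Leftrightarrow$ (iv), the independence of $T_1,T_2$ factorizes the conditioning in (\ref{BivIFR}) and reduces the two conditional probabilities to $\bar{G}(t_1+s)/\bar{G}(t_1)$ and $\bar{G}(t_2+s)/\bar{G}(t_2)$ respectively; the asserted inequality then becomes precisely the Schur-concavity inequality on $W$.

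No step is genuinely difficult: the proof is a chain of algebraic rewritings, and the main obstacle is simply keeping the roles of $t,s$ and $t_1,t_2$ straight across the four formulations. The only even mildly subtle point is the legitimacy of taking logarithms, which is exactly what the standing hypothesis on $\bar{G}$ secures. The conceptual takeaway, emphasized in the paragraph introducing Schur-concavity, is that IFR is a geometric property of the level curves of $W$; this is the observation that will make the extension to the exchangeable (not necessarily independent) setting in the subsequent sections natural.
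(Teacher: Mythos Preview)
Your proposal is correct and matches the spirit of the paper exactly: the paper itself omits the proof, saying only that it is ``almost immediate'' and referring to~\cite{Spiz01}, and your chain of algebraic rewritings---reducing each of (i)--(iv) to the single inequality $\overline{G}(t_1+s)\overline{G}(t_2)\geq\overline{G}(t_1)\overline{G}(t_2+s)$---is precisely the kind of bookkeeping the authors have in mind. Your closing remark about IFR being a level-curve property of $W$ also anticipates exactly the use the paper makes of this proposition later on.
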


\begin{remark}\label{rem:IFR-g/G}
In the case when a non-negative random variable $T$ has an absolutely
continuous one-dimensional marginal distribution, with a density probability
function denoted by $g(t)$, we can consider the \emph{failure rate function}%
\[
r(t):=-\frac{d}{dt}\log\overline{G}\left( t\right) =\frac{g(t)}{\overline
{G}\left( t\right) }.
\]
By item $(ii)$ of Proposition~\ref{prop:IFR-equiv}
the latter is, of course,  non-decreasing
 when~$T$
is~\emph{IFR} (whence the present terminology just arises).
\end{remark}

\begin{remark}\label{rem:IFR-univ}
Item $(iii)$ of~Proposition~\ref{prop:IFR-equiv} is a property of the level curves of the survival function $\overline{G}(t_1) \overline{G}(t_2)$. We notice furthermore that the
characterizations given in both items~$(iii)$ and~$(iv)$
 allow us to
express the univariate positive ageing notion of \emph{IFR} in terms of conditions
for the bivariate joint distribution of two i.i.d.\@ random variables.  These
observations will turn out to be relevant in our discussion.
\end{remark}

By suitably modifying  the statement of Proposition~2.1,
and also the above two Remarks,
 one can directly obtain corresponding statements that are valid for
the univariate, negative-ageing, concept of DFR. So far there is in fact a
complete symmetry between the two notions of IFR and DFR. On the contrary,
the following result  points out an aspect of lack of
symmetry between the two notions.

Let $\Xi$ be a set of indexes and, for the sake of notational
simplicity, let us consider a family of absolutely continuous survival
functions over the half-line $[0,+\infty)$ $\{\overline{G}_{\theta};\theta\in\Xi\}$ and let $\overline{G}$ be given by a mixture of the~$\overline{G}_{\theta}$'s.

\begin{proposition}\label{prop:DFR-mixture}
If $\overline{G}_{\theta}$ is \emph{DFR}, $\forall\theta\in\Xi$, then
also  any mixture $\overline{G}$ is \emph{DFR}.
\end{proposition}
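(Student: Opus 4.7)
The plan is to read DFR as log-convexity of the survival function---the direct counterpart of item~$(ii)$ of Proposition~\ref{prop:IFR-equiv}, with the inequality reversed---and then to prove that mixtures preserve log-convexity by combining a pointwise use of the hypothesis with Hölder's inequality applied to the mixing integral.

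More concretely, I would first rewrite the assumption in the form: for each $\theta\in\Xi$, every $\lambda\in[0,1]$, and every $t_1,t_2\geq 0$,
\begin{equation*}
\overline{G}_{\theta}\bigl(\lambda t_1+(1-\lambda)t_2\bigr) \;\leq\; \overline{G}_{\theta}(t_1)^{\lambda}\,\overline{G}_{\theta}(t_2)^{1-\lambda}.
\end{equation*}
Writing the mixture as $\overline{G}(t)=\int_{\Xi}\overline{G}_{\theta}(t)\,d\mu(\theta)$ for a probability measure $\mu$ on $\Xi$, I would integrate this pointwise inequality against $d\mu$ and then invoke Hölder's inequality with conjugate exponents $1/\lambda$ and $1/(1-\lambda)$, obtaining
\begin{equation*}
\int_{\Xi}\overline{G}_{\theta}(t_1)^{\lambda}\,\overline{G}_{\theta}(t_2)^{1-\lambda}\,d\mu(\theta) \;\leq\; \left(\int_{\Xi}\overline{G}_{\theta}(t_1)\,d\mu(\theta)\right)^{\lambda}\!\left(\int_{\Xi}\overline{G}_{\theta}(t_2)\,d\mu(\theta)\right)^{1-\lambda}\!\!= \overline{G}(t_1)^{\lambda}\overline{G}(t_2)^{1-\lambda}.
\end{equation*}
Chaining the two inequalities shows that $\overline{G}$ itself is log-convex, and reading the characterization in the reverse direction yields the DFR property of the mixture.

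I do not foresee a genuine obstacle: both ingredients---the pointwise log-convexity of each $\overline{G}_{\theta}$ and the integral Hölder inequality---are standard. The only delicate point is a conceptual one, namely verifying that the DFR analog of Proposition~\ref{prop:IFR-equiv}$(ii)$, i.e.\ \textit{DFR iff $\log\overline{G}$ is convex}, holds under the blanket assumption (strictly decreasing, positive $\overline{G}$) adopted in the paper. It is worth observing that the argument is intrinsically one-sided: the analogous attempt for IFR would require the \emph{reverse} pointwise inequality for $\overline{G}_{\theta}$, which cannot be passed through the mixing integral by Hölder; this is precisely the asymmetry between IFR and DFR anticipated in the paragraph preceding the statement.
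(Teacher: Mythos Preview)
Your argument is correct: DFR is equivalent to log-convexity of the survival function (the dual of Proposition~\ref{prop:IFR-equiv}(ii)), log-convexity of each $\overline{G}_\theta$ passes under the mixing integral, and H\"older's inequality with exponents $1/\lambda$, $1/(1-\lambda)$ closes the chain. Your final remark about why the same trick fails for IFR is also exactly to the point.

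The paper, however, proceeds differently. It does not write out a formal proof of the proposition (Remark~\ref{rem:mixture-of-DFR-non-necess-IFR-bis} simply says ``it is easy to prove''); instead it offers a Bayesian-statistical \emph{explanation}, working under the additional assumption of absolute continuity and the simplifying hypothesis~(\ref{HazRateOrdering}) that $r_\theta(t)$ is monotone in~$\theta$. Under that assumption it expresses the mixture failure rate as $r(t)=\int r_\theta(t)\,d\Pi_\Theta(\theta\mid T>t)$, shows that the posterior $\Pi_\Theta(\cdot\mid T>t)$ is stochastically decreasing in~$t$, and concludes that in the conditionally-DFR case both effects---the decrease of $r_\theta(\cdot)$ and the shift of the posterior toward smaller~$\theta$---push $r(t)$ downward, whereas in the conditionally-IFR case they conflict. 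This viewpoint is what the paper wants, because it \emph{explains} the asymmetry between IFR and DFR and frames it as a Simpson-type paradox. Your route is shorter, needs no density or monotonicity-in-$\theta$ assumption, and yields the full statement of Proposition~\ref{prop:DFR-mixture} rather than a special case; what it gives up is the statistical interpretation that motivates the subsequent discussion in Section~\ref{section:exchangeable}.
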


\begin{remark}\label{rem:mixture-of-DFR-non-necess-IFR}
Generally a mixture $\overline{G}$ will not be \emph{IFR}, under the condition that
$\overline{G}_{\theta}$ is \emph{IFR}, $\forall\theta\in$ $\Xi$.
\end{remark}
\begin{remark}\label{rem:mixture-of-DFR-non-necess-IFR-bis}
It is easy to prove Proposition~\ref{prop:DFR-mixture}. As an immediate consequence, since the exponential distributions are \emph{DFR},  one obtains that a mixture of
exponential distributions is \emph{DFR}. Since the exponential distributions are \emph{IFR} as well, this property is a counterexample related
with the above Remark~\ref{rem:mixture-of-DFR-non-necess-IFR}.
\end{remark}

We now present an argument that allows one to understand both
the Proposition~\ref{prop:DFR-mixture} and the above Remarks from the point of view of Bayesian Statistics.

We set $\Xi
\equiv\mathbb{R}_{+}$ and consider,
along with $T$, a non-negative random variable $\Theta$ such that the joint distribution of $\left( \Theta,T\right) $ is absolutely
continuous with respect to the product of the two marginal distributions.
\\ In what follows $\Pi_{\Theta}$ denotes the marginal probability distribution of $\Theta$, and,
for $\theta\geq0$, $\overline{G}_{\theta}$ and $g_{\theta}$, denote the
conditional survival function and conditional density function of $T$ given
$  \Theta=\theta  $, respectively, namely%
\[
\overline{G}_{\theta}(t)=\int_{t}^{+\infty}g_{\theta}\left( s\right) ds.
\]
With this notation the failure rate function of the conditional distribution of $T$, given~$\Theta=\theta$,
is the ratio
\[
r_{\theta}(t)=\frac{g_{\theta}(t)}{\overline{G}_{\theta}(t)}.
\]

The one-dimensional survival function of $T$ and its probability density
function are respectively defined by the mixtures
\begin{equation}
\overline{G}\left( t\right) =\int_{0}^{+\infty}\overline{G}_{\theta}\left(
t\right)  d\Pi_{\Theta}\left(  \theta\right)  ;\quad g\left(  t\right)  =\int_{0}^{+\infty}g_{\theta}\left(  t\right)  d\Pi_{\Theta}\left(  \theta\right). \label{PredictiveSurvFnct&Density}%
\end{equation}

Consider now the failure rate function $r$ of $T$%
\[
r(t)=\frac{g\left(  t\right)  }{\overline{G}\left(  t\right)  }
=\frac{\int_{0}^{+\infty}g_{\theta}\left(  t\right)  d\Pi_{\Theta}\left(  \theta\right)}{\int_{0}^{+\infty}\overline{G}_{\theta}\left(  t\right)  d\Pi_{\Theta}\left(  \theta\right)  }
\]
and the conditional probability distribution of $\Theta$ given the event
$\left\{  T>t\right\}  $, which is defined by the equation%
\[
d\Pi_{\Theta}\left(  \theta|T>t\right)  =\frac{\overline{G}_{\theta}\left(
t\right)  d\Pi_{\Theta}\left(  \theta\right)  }{\int_{0}^{+\infty}\overline
{G}_{\theta}\left(  t\right)  d\Pi_{\Theta}\left(  \theta\right)  }.
\]
Thus we can write%
\begin{equation}
r(t)=\int_{0}^{+\infty}r_{\theta}(t)d\Pi_{\Theta}\left(  \theta|T>t\right)  .
\label{PredictiveFailureRate}%
\end{equation}

We can now compare the mixing measure $\Pi_{\Theta}\left(  \theta\right)  $
and the mixing measure $\Pi_{\Theta}\left(  \theta|T>t\right)  $, appearing in~(\ref{PredictiveSurvFnct&Density}) and~(\ref{PredictiveFailureRate}) respectively.

The latter measure does obviously depend on $t$, while the former does not.
This aspect is at basis of the argument aiming to justifying
Proposition~\ref{prop:DFR-mixture},  Remark~\ref{rem:mixture-of-DFR-non-necess-IFR}  and Remark~\ref{rem:mixture-of-DFR-non-necess-IFR-bis} .

In this respect we assume, for simplicity's sake, that $\Pi_{\Theta}$ is
absolutely continuous, denote its density function by $\pi_{\Theta}\left(
\theta\right)  $, and consider, for $0\leq t_{1}<t_{2}$, the ratio between the
conditional densities of $\Theta$, given the events $\left\{ T>t_{1}\right\}  $
and $\left\{  T>t_{2}\right\} $. By using the Bayes Formula, we obtain%
\begin{equation}\label{ratio-pi-theta}
\frac{\pi_{\Theta}\left(  \theta|T>t_{2}\right)  }{\pi_{\Theta}\left(
\theta|T>t_{1}\right)  }=\frac{\pi_{\Theta}\left(  \theta\right)  \overline
{G}_{\theta}\left(  t_{2}\right)  }{\pi_{\Theta}\left(  \theta\right)
\overline{G}_{\theta}\left(  t_{1}\right)  }\frac{\overline{G}\left(
t_{1}\right)  }{\overline{G}\left(  t_{2}\right)  }.
\end{equation}

As a substantial simplification, furthermore, we consider the case when
$r_{\theta}(t)$ is monotone w.r.t.\@ the variable $\theta$,  for any $t\geq0$: for example,
\begin{equation}
r_{\theta^{\prime}}(t)\leq r_{\theta^{\prime\prime}}(t), \quad \text{for any } \theta^\prime \leq \theta^{\prime\prime}.\label{HazRateOrdering}%
\end{equation}
Namely we consider the case when the ratio%
\[
\frac{\overline{G}_{\theta^{\prime\prime}}\left(  t\right)  }{\overline
{G}_{\theta^{\prime}}\left(  t\right)  }=\exp\left\{-\int_{0}^{t}\left[
r_{\theta^{\prime\prime}}(s)-r_{\theta^{\prime}}(s)\right]  ds\right\}
\]
is non-increasing as a function of $t$.  Under the assumption~(\ref{HazRateOrdering}), we separately consider now the cases when the IFR
property holds for all the conditional distributions of $T$ given $ \Theta=\theta  $ and when  the DFR property holds for all the
conditional distributions of $T$ given $  \Theta=\theta$.

For $0\leq t_{1}<t_{2}$, 
we have     that the
ratio $\frac{\overline{G}_{\theta}\left(  t_{2}\right)  }{\overline{G}%
_{\theta}\left(  t_{1}\right)  }$, and therefore the ratio in~(\ref{ratio-pi-theta}), is a non-increasing function of~$\theta$.

 This condition implies (see, e.g., \cite{ShaSha2007};
 see also Chapter~3 in~\cite{Spiz01})
that the conditional
distribution  $\Pi_{\Theta}\left(  \theta|T>t_{1}\right)  $ is stochastically
greater than $\Pi_{\Theta}\left(  \theta|T>t_{2}\right)  $, namely, for any
non-decreasing function $\delta\left(  \theta\right)  $, one has
\[
\int_{0}^{+\infty}\delta\left(  \theta\right)  \pi_{\Theta}\left(
\theta|T>t_{1}\right)\, d\theta  \geq\int_{0}^{+\infty}\delta\left(  \theta\right)
\pi_{\Theta}\left(  \theta|T>t_{2}\right) \, d\theta .
\]

Thus, in the conditionally DFR case, when, for any $\theta\geq0$, the functions $r_{\theta}(t)$ are
non-increasing w.r.t.\@ $t$, one obviously have that
$r(t)$ is non-increasing as well. Indeed
\begin{align*}
r(t_{1})&=\int_{0}^{+\infty}r_{\theta}(t_{1})\pi_{\Theta}\left(  \theta
|T>t_{1}\right) \, d\theta \geq\int_{0}^{+\infty}r_{\theta}(t_{1})\pi_{\Theta}\left(
\theta|T>t_{2}\right)\, d\theta
\intertext{and therefore, since $r_{\theta}(t_{1})\geq r_{\theta}(t_{2})$, one gets}& \geq\int_{0}^{+\infty}r_{\theta}(t_{2})\pi_{\Theta
}\left(  \theta|T>t_{2}\right) \, d\theta =r(t_2).
\end{align*}

On the contrary, in the conditionally IFR case, when, for any $\theta\geq0$, the functions $r_{\theta}(t)$ are
non-decreasing w.r.t.\@ $t$,   one is not allowed to
conclude that also $r(t)$ is non-decreasing since, in this case, $r_{\theta}(t_{1})\leq r_{\theta}(t_{2})$.

It is thus clear that, even assuming the IFR property for all the conditional
distributions of $T$ given $\Theta=\theta$, we cannot
generally conclude that the marginal distribution of $T$ is IFR as well
(notice that the latter conclusions would also stand valid under the condition
that $r_{\theta}(t)$ is decreasing w.r.t.\@ the argument $\theta$). In this
respect, see also arguments in~\cite{barlow1985bayes}.
 The present circumstance, that
the IFR property can go lost under the operation of making mixtures, can also
be understood as a \textit{Simpson-type paradox}.
(See Remark~\ref{rem:BivIFR-versus-MarginIFR} below)

\section{The case of exchangeable variables}\label{section:exchangeable}

In the items $(iii)$ and $(iv)$ of Proposition~\ref{prop:IFR-equiv},
the case of two i.i.d.\@ random times was
considered. Focusing attention on the vector $\mathbf{X}\equiv\left(
X_{1},...,X_{n}\right) $ of non-negative \textit{exchangeable} (non-independent) random
variables, we notice that it still makes sense to consider for $X_{1},...,X_{n}$ a condition such as in~(\ref{BivIFR}):   namely, for $i\neq j$, for $0\leq x<y$, and $t\geq 0$,
\begin{equation}
\mathbb{P}\left( X_{i}>x+t|X_{i}>x,X_{j}>y\right) \geq\mathbb{P}\left(
X_{j}>y+t|X_{i}>x,X_{j}>y\right) . \label{BivIFRforX}%
\end{equation}

Actually, in a subjective-probability or Bayesian framework, the latter can be interpreted as a \textit{bivariate condition of
positive ageing} (see~\cite{Spiz92}, \cite{BarMen92}, \cite{BarSpiz93},
\cite{BasSpiz99}, \cite{Spiz01}, \cite{LaiXie2006}). As remarked above, (see~(\ref{BivIFR})  in Proposition~\ref{prop:IFR-equiv}), such a condition is
equivalent to the IFR property of the marginal survival function~$\overline
{G}$, when $X_{1},...,X_{n}$ are i.i.d.\@ variables.
The opposite inequality
\begin{equation}
\mathbb{P}\left( X_{i}>x+t|X_{i}>x,X_{j}>y\right)  \leq \mathbb{P}\left(
X_{j}>y+t|X_{i}>x,X_{j}>y\right) . \label{BivDFRforX}%
\end{equation}
is, on the contrary, equivalent to the DFR property of $\overline{G}$ and can therefore be interpreted as a \textit{bivariate condition of negative ageing}.
\\

The above inequalities~(\ref{BivIFRforX}) and~(\ref{BivDFRforX}) are conditions on   the joint bivariate survival function  of
any pair $\left( X_{i},X_{j}\right) $,   with  \mbox{$1\leq i\neq j\leq n$}, and  we will denote the latter by
\begin{equation}\label{F-2}
\overline{F}^{(2)}(x,y)=\mathbb{P}\left( X_{i}>x,X_{j}%
>y\right) ;\quad x,\,y\geq0.
\end{equation}
In order to highlight that~(\ref{BivIFRforX}) and~(\ref{BivDFRforX})  are properties of  positive/negative of bivariate ageing, it will be convenient to say that $\overline{F}^{(2)}(x,y)$ is \emph{Bayesian bivariate Increasing/Decreasing Failure Rate}, abbreviated to Bayesian biv-IFR/Bayesian biv-DFR.\\

Let us now consider the case of conditionally independent, identically distributed, variables $X_{1},...,X_{n}$.
What can be said for this case?

Let $\Theta$
 be a $\Xi$-valued random parameter (with $\Xi
\subseteq\mathbb{R}^{d}$, say), with probability distribution
$\Pi_{\Theta}$
and let $X_{1},...,X_{n}$ be conditionally independent
given $\Theta$,
with a conditional one-dimensional survival function
$\overline{G}(\cdot|\theta)$ for $\theta\in\Xi$, namely
\[
\overline{F}_{\mathbf{X}}\left( x_{1},...,x_{n}\right) =
\int_\Xi\overline{G}(x_{1}|\theta)\cdot...\cdot\overline{G}(x_{n}|\theta)
d\Pi_\Theta(\theta).
\]
In such a case, the condition
\[
\overline{G}(\cdot|\theta)\,\text{ is IFR }\quad \forall\;\theta\in\Xi
\]
implies that the
bivariate condition of
positive ageing~(\ref{BivIFRforX}) holds true. However  (see Remarks~\ref{rem:mixture-of-DFR-non-necess-IFR}, \ref{rem:mixture-of-DFR-non-necess-IFR-bis}, and subsequent arguments)  such a condition does not
imply the IFR property of the marginal survival function
\[
\overline{G}(x)= \int_{\Xi}\overline{G}(x|\theta)d\Pi_{\Theta}(\theta),
\]
which would result, for $t>0$ and $0\leq x<y$, in the inequality%
\begin{equation}
\mathbb{P}\left( X_{i}>x+t|X_{i}>x\right) \geq\mathbb{P}\left(
X_{j}>y+s|X_{j}>y\right) . \label{MarginIFR}%
\end{equation}

The condition
\[
\overline{G}(\cdot|\theta)\;\text{ is DFR }\quad \forall\;\theta\in\Xi
\]
implies that both the
 condition that $\overline{G}$ is   DFR  and the
condition of bivariate negative ageing~(\ref{BivDFRforX})
hold  true.

\begin{remark}\label{rem:BivIFR-versus-MarginIFR}
Notice that the comparison in (\ref{BivIFR}) is established between the
conditional probabilities of two different events given a same conditioning
event. In (\ref{MarginIFR}), on the contrary, we compare two conditional
probabilities containing two different conditioning events. The inequality
(\ref{MarginIFR}) is not implied then by the assumption
\[
\mathbb{P}\left( X_{i}>x+t|X_{i}>x;\theta\right) \geq\mathbb{P}\left(
X_{j}>y+t|X_{j}>y;\theta\right)
\]
as we had, e.g., shown above, for the case when $\Xi=\mathbb{R}_{+}$ and
$\Theta|\left( X_{1}>x\right) $ is stochastically larger than $\Theta
|\left( X_{1}>y\right) $, for $0\leq x<y$. This conclusion can be looked at
as a Simpson-type Paradox in the sense of (see~\cite{ScarSpiz99}).
\end{remark}
\bigskip

As it is well known, dating back to the original work by de Finetti (see, e.g.,~\cite{Def1937}),
the other cases of exchangeability different from those of conditional  independent and identical distribution, are those of \textit{finite  exchangeability}.

What about the relation between the univariate and bivariate properties of
ageing in the case when, alternatively to conditional independence, we assume
$X_{1},...,X_{n}$ to be \textit{finitely exchangeable}?

Such relations are generally influenced by  the type of stochastic dependence among $X_{1},...,X_{n}$.
The property of conditional independence does, in any case, imply some
sort of positive dependence among $X_{1},...,X_{n}$. At least, positive
correlation between $X_{1},X_{j}$ for $1\leq i\neq j\leq n$, as is very well-known.

In the case when $X_{1},...,X_{n}$ are finitely exchangeable, the relations
between the ageing properties of $\overline{G}$ and the conditions
(\ref{BivIFRforX}), (\ref{BivDFRforX}) may be a bit more involved. Actually,
in such a case, one can meet different types of (positive or negative)
stochastic dependence (see, e.g.,~\cite{Spiz01}) and the marginal survival function $\overline{G}$ can
still be a mixture of given survival functions, but some of the coefficients
of the mixture may be negative. (See, e.g., \cite{kerns-et-al2006}, \cite{Jan-Konst16repres}, \cite{Leon16}).
\\
For our purposes, however, it is not really relevant to distinguish between
finite or infinite exchangeability. Rather, we look at the actual properties
of stochastic dependence of bivariate distributions, i.e., of the joint survival function~$\overline{F}^{\left( 2\right) }$, defined in~(\ref{F-2}).
The rest of the paper
will be then devoted to showing some results relating stochastic dependence and bivariate ageing properties. Preliminarily, on this purpose it is convenient to present a brief review of technical definitions and related properties concerning bivariate survival models as in (\ref{F-2}). This will be done in the next section, before continuing our analysis in the subsequent sections.

\section{A review about bivariate copulas and ageing functions} \label{section:richiami}

For our convenience, and to fix the notation, we start this section by just recalling  well-known
facts about bivariate copulas.
We recall  that a \emph{bivariate copula} is a function $C:\left[  0,1\right]^{2}\rightarrow\left[  0,1\right]  $ such
that
\begin{align}
C\left(  0,v\right)   &  =C\left(  u,0\right)  =0,\quad0\leq u,v\leq;
1\label{eq:copuno}\\
C\left(  1,v\right)   &  =v,\quad C\left(  u,1\right)  =u,\quad0\leq
u,v\leq1;\label{eq:copdue}\\
C\left(  u,v\right)  & \quad\mathrm{is\ increasing \ in \ each \ variable;}
\label{eq:copquattro}
\\
C\left(  u,v\right)  &+C\left(  u^{\prime},v^{\prime}\right) -C\left(
u,v^{\prime}\right)  -C\left(  u^{\prime},v\right) \geq0,
\label{eq:copcinque}%
\\&\text{${}$\quad for all $ 0\leq u\leq
u^{\prime}\leq1,\quad0\leq v\leq v^{\prime}\leq1$.}\notag
\end{align}
In other words a bivariate copula is the restriction to $[0,1]^2$ of a joint distribution function for a pair a random variables $U, V$, uniformly distributed in $[0,1]$.
\\
From now on the term bivariate will be  generally dropped.
\\
Three special copulas are the following ones:
\begin{align*}
\text{the \emph{independence} copula:}\quad \Pi(u,v)&=uv;
\\
\text{the \emph{maximal} copula:}\quad M(u,v)&= u\wedge v;
\\
\text{the \emph{minimal} copula:}\quad W(u,v)&=\max\big(1-(u+v),0\big).
\end{align*}
\\
We remind that any bivariate distribution function  $F(x,y)$, with marginals $F_X(x), \, F_Y(y)$ can be written as
$$
F(x,y)= C(F_X(x), F_Y(y)),
$$
where $C$ is a copula. When $F_X$, $F_Y$ and $F$ are continuous, then the copula is unique and we refer to it as the corresponding \emph{connecting copula}.
\\
Moreover, under the same conditions, the bivariate survival function  $\overline{F}(x,y)$, with survival marginals $\overline{F}_X(x), \, \overline{F}_Y(y)$ can be written as
\begin{equation}\label{Fbar-con-C-hat}
\overline{F}(x,y)= \widehat{C}(\overline{F}_X(x), \overline{F}_Y(y)),
\end{equation}
where $\widehat{C}$ is the corresponding \textit{two-dimensional survival copula}.  Furthermore looking at $C$ and $\widehat{C}$ as joint distribution functions of the pairs  $(U,\,V)$  and $(\widehat{U},\,\widehat{V})$, respectively, one can write
\begin{align}\label{def:U-V}
U=F_X(X),\; V=F_Y(Y), \quad \widehat{U}=\overline{F}_X(X)=1-U,\; \widehat{V}=\overline{F}_Y(Y)=1-V.
\end{align}
The survival copula $\widehat{C}$ is linked to the connecting copula  $C$ by means of the following relation
$$
\widehat{C}(u,v)=u+v-1+C(1-u,1-v).
$$
\indent The concept of copula is relevant in the description of dependence properties among random variables (see, e.g., \cite{Joe1997}, \cite{Nelsen2006},  \cite{DurSemBook2016}). Often a dependence property  for a joint bivariate distribution, is equivalent to the same dependence property of the   connecting copula.
Obviously, properties of dependence may be assessed for the survival copula as well. It may happen that assessing a dependence property on the survival copula or on the connecting copula gives rise to different conditions for the joint distribution (see in particular  Remarks~\ref{rem:LTD} and~\ref{rem:PKD}, below).
\\

 We now pass to recalling a few relevant definitions of dependence properties.
We start with the \textit{Positive Quadrant Dependence}~(PQD) property, i.e., the events $\{Y\leq y\}$ and $\{X\leq x\}$, are positive correlated for all $x$ and $y$:
\begin{align*}
C(u,v)\geq  \Pi(u,v)=  uv \quad &\Leftrightarrow \quad F(x,y)\geq F_X(x)\, F_Y(y),
\\
\Updownarrow\quad \qquad \qquad \qquad  &{} \qquad \qquad \qquad  \Updownarrow
\\
\widehat{C}(u,v)\geq  \Pi(u,v)=  uv \quad &\Leftrightarrow \quad \overline{F}(x,y)\geq \overline{F}_X(x)\, \overline{F}_Y(y).
\end{align*}
Another dependence property to be recalled is the \emph{Stochastic} \emph{In\-creasing\-ness}~(SI) in $X$:
$$
x\mapsto \mathbb{P}(Y>y|X=x) \quad \text{is  increasing,}
$$
which is equivalent to the following properties of $C(u,v)$ and $\widehat{C}(u,v)$
$$
u\mapsto \mathbb{P}(V>v|U=u) \; \text{is increasing,}\quad \Leftrightarrow \quad  u \mapsto \mathbb{P}(\widehat{V}>v|\widehat{U}=u) \; \text{is increasing.}
$$
A further dependence property is the condition: $Y$ is  \emph{Left Tail Decreasing} (LTD) in $X$, i.e., the function $x \mapsto \mathbb{P}(Y\leq y| X\leq x)$ is decreasing:
\begin{align*}
\frac{F(x^\prime,y)}{F_X(x^\prime)}& \leq \frac{F(x,y)}{F_X(x)} \quad \text{for all $x\leq x^\prime$, and for all $y$,}
\\
&\Updownarrow
\\
\frac{C(u^\prime,v)}{u^\prime} & \leq \frac{C(u ,v)}{u }, \quad \text{for all $0< u\leq u^\prime\leq 1$, and for all $0\leq v\leq 1 $},
\end{align*}
i.e., $V$ is LTD in $U$, where $U$ and $V$ have been defined in~(\ref{def:U-V}).
\begin{remark}\label{rem:LTD}
The corresponding   property for the survival copula $\widehat{C}$, i.e., $\widehat{V}$ is \emph{LTD} in $\widehat{U}$,
namely
$$
\frac{\widehat{C}(u^\prime,v)}{u^\prime}  \leq \frac{\widehat{C}(u ,v)}{u }, \quad \text{for all $0< u\leq u^\prime\leq 1$, and for all $0\leq v\leq 1 $},
$$
is instead equivalent to the condition that $Y$ is \emph{Right Tail Increasing} (\emph{RTI}) in $X$, i.e., the function $x\mapsto \mathbb{P}(Y> y| X> x)$ is increasing.
\end{remark}

We are now going to recall further dependence properties that are specially relevant for what follows.

We start with the so called \textit{Supermigrativity} property: for an arbitrary copula $D$, i.e.,
\begin{equation}
D\left( us,v\right) \geq D\left( u,sv\right) , \label{SuprMigrtv}%
\end{equation}
for $0\leq v\leq u\leq1$ and $s\in\left( 0,1\right) $. 

This condition,  applied to the survival copula $\widehat{C}$,  has emerged to describe the
Schur-concavity of $\overline{F}$ (see~\cite{BasSpiz05a}, see also Proposition~\ref{lemma-Equiv-SCHUR},  below).
The term \textit{Supermigrativity} has been
coined in~\cite{DurGhis09}.
One says that a copula $D$ is \textit{submigrative} when the direction of inequality is inverted in~(\ref{SuprMigrtv}):
\begin{equation}
D\left( us,v\right) \leq D\left( u,sv\right) , \label{SubMigrtv}%
\end{equation}
for $0\leq v\leq u\leq1$ and $s\in\left( 0,1\right) $.

In a sense, Supermigrativity can be
seen as a property of positive dependence: In particular it implies the
PQD property.
\begin{example} \label{example:Comonotone-supermigrative}
The  extreme case of positive dependence is the one of  \textquotedblleft comonotone" dependence:
$$ \mathbb{P}(X=Y)=1$$
In such a case the survival copula coincides with the  maximal  copula:
$$
\widehat{C}(u,v)=M(u,v)= u\wedge v,
$$
and is obviously supermigrative.
\end{example}

In order to highlight that Super/Submigrativity are   properties of positive/negative dependence   it will be convenient to use also the terms \emph{Positive/Negative Migrativity Dependence}, abbreviated to PMD/NMD.
\begin{remark}\label{rem:LTD--SuperMigr-ARCH}
As pointed out in \cite{BasSpiz05a} (see Proposition 6.1 therein), Supermigrativity (or equivalently \emph{PMD} property) does coincide with
the \emph{LTD} property in the case of an Archimedean copula (a brief review of Archimedean copulas will be given next).
Several other features  related with Supermigrativity have been analyzed in~\cite{DurGhis09}).
\end{remark}

The last  dependence property to be recalled here is the \emph{Positive Kendall Dependence} (PKD) property (see Definiton~\ref{def:PKD}, below). This property will have a relevant role for our results in Section~\ref{section:IFRA-PKD} and is  connected with the
  \emph{Kendall distribution function
associated to} $F$ (see, e.g., \cite{NelAl03})
\begin{equation}
K(t):=\mathbb{P}(F(X,Y)\leq t)= \mathbb{P}(C(U,V)\leq t). \label{Kendall-F}
\end{equation}
We point out that $K(t)$ depends only on the connecting copula $C$, and we will also use the notation $K_C(t)$. We also recall that $K_C(t)\geq t$ in $[0,1]$, as is easily checked.

In particular for the independent copula $\Pi(u,v)=uv$, one has
$$
 K_\Pi(t)= t - t \log(t).
 $$

 In analogy with the Kendall distribution one can consider (see~\cite{NapSpiz09}) the \textit{upper-orthant Kendall
distribution} associated to~$\overline{F}$, i.e.,
\begin{equation}
\widehat{K}(t):=\mathbb{P}(\overline{F}(X,Y)\leq
t)=\mathbb{P}(\widehat{C}(\widehat{U}, \widehat{V})\leq t). \label{UOBipitDistr}
\end{equation}

In other words $\widehat{K}(t)=K_{\widehat{C}}(t)$.
\begin{remark}\label{rem:mu-C}
Note that
$$
K_C(t)= \mu_C \big(\{(u,v) \in [0,1]^2:\; C(u,v)\leq t\} \big).
$$
where  $\mu_C$ is the probability measure on $[0,1]^2$ such that
$$\mu_C((u ,u^\prime]\times  (v ,v^\prime])= C\left(  u,v\right)  +C\left(  u^{\prime},v^{\prime}\right) -C\left( u,v^{\prime}\right)  -C\left(  u^{\prime},v\right).$$
\end{remark}

The above remark suggests an alternative way to compute $K_C(t)$. This will be done in the following Lemma~\ref{lemma:KC-con-sums}, the proof of which is almost immediate and will be omitted.
\\
Let
$$\{u_0=0 \leq u_{1}\leq \cdots \leq u_i \leq u_{i+1}\leq \cdots \leq u_n\}$$
 be a finite partition of $(0,1]$, and denote by $ \mathcal{P}$ the class of all finite partitions of~$(0,1]$.
 Set furthermore
$$
C^{-1}_u(t):= \sup\{v:\; C(u,v)\leq t\},
$$
the generalized inverse of $v\mapsto C(u,v)$.

\begin{lemma}\label{lemma:KC-con-sums}
Let $C(u,v)$ be a copula, then
\begin{align}
K_C(t)&=\sup_{\mathcal{P}} \Big\{\sum_{i\in I} [C(u_{i+1},v_i)-C(u_i,v_i)], \; \text{with }\; v_i=C^{-1}_{u_{i+1}}(t)\Big\}
\end{align}
\end{lemma}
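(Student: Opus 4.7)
My plan is to view the sum in the statement as the $\mu_C$-measure of a ``staircase'' region contained in the sub-level set $A_t:=\{(u,v)\in[0,1]^2:C(u,v)\leq t\}$, and then to show that by refining the partition this staircase exhausts $A_t$ up to a $\mu_C$-null set. By Remark~\ref{rem:mu-C}, $K_C(t)=\mu_C(A_t)$, which will give the desired supremum.

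First I would exploit the fact that, for each fixed $u\in[0,1]$, the map $v\mapsto C(u,v)$ is non-decreasing and (Lipschitz) continuous, so that $\{v:C(u,v)\leq t\}=[0,v(u)]$ with $v(u):=C_u^{-1}(t)$, and moreover $v(u)$ is non-increasing in $u$. Thus
\[
A_t=\{(u,v)\in[0,1]^2 : v\leq v(u)\}.
\]
Given a partition $\mathcal{P}=\{0=u_0<u_1<\cdots<u_n=1\}$ of $(0,1]$, set $v_i:=v(u_{i+1})=C_{u_{i+1}}^{-1}(t)$ and define the staircase approximation
\[
S(\mathcal{P}):=\bigcup_{i=0}^{n-1} (u_i,u_{i+1}]\times[0,v_i].
\]
Since $v(u)\geq v(u_{i+1})=v_i$ for $u\in(u_i,u_{i+1}]$, and since $C(u_{i+1},v_i)\leq t$ by continuity of $C$ in its second argument, each rectangle $(u_i,u_{i+1}]\times[0,v_i]$ lies in $A_t$; hence $S(\mathcal{P})\subseteq A_t$.

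The second step is the computation of $\mu_C(S(\mathcal{P}))$. Since the rectangles forming $S(\mathcal{P})$ are pairwise disjoint (they lie over disjoint strips $(u_i,u_{i+1}]$), additivity gives
\[
\mu_C(S(\mathcal{P}))=\sum_{i=0}^{n-1}\mu_C\bigl((u_i,u_{i+1}]\times(0,v_i]\bigr)=\sum_{i=0}^{n-1}\bigl[C(u_{i+1},v_i)-C(u_i,v_i)\bigr],
\]
where the second equality follows from the formula in Remark~\ref{rem:mu-C} together with $C(u,0)=0$ (the strips $(u_i,u_{i+1}]\times\{0\}$ carry no $\mu_C$-mass because $\mu_C$ charges at most a horizontal segment at $v=0$ with mass zero). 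Combining this with $S(\mathcal{P})\subseteq A_t$ yields the inequality $\sum_i[C(u_{i+1},v_i)-C(u_i,v_i)]\leq K_C(t)$ for every $\mathcal{P}\in\mathcal{P}$, so the supremum is at most $K_C(t)$.

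For the reverse direction I would take a sequence $\mathcal{P}_n$ of refining partitions whose mesh tends to $0$. Because $u\mapsto v(u)$ is non-increasing, it has at most countably many discontinuity points; at each continuity point $u$ one has $v(u_{i+1})\to v(u)$ as $u_{i+1}\downarrow u$, so the vertical slices of $A_t\setminus S(\mathcal{P}_n)$ shrink pointwise a.e.\ to the empty set (modulo the countable set of jumps of $v$, which has Lebesgue measure zero, and which in turn has $\mu_C$-measure zero in the corresponding vertical strips up to possible atomic components that are absorbed on refinement). Monotone convergence (or dominated convergence applied to the indicator of $S(\mathcal{P}_n)$) then gives $\mu_C(S(\mathcal{P}_n))\uparrow \mu_C(A_t)=K_C(t)$, completing the proof.

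The main technical obstacle is precisely the last convergence step: the map $u\mapsto v(u)$ may have jumps (if $C$ is flat in some vertical strip), and the $\mu_C$-mass sitting on the graph of $v(\cdot)$ itself must be carefully accounted for in order to confirm that refining the partition indeed recovers all of $\mu_C(A_t)$ rather than only the ``open'' sub-level set. Continuity of $C$ in each argument is what makes this control possible.
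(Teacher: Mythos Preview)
Your approach is sound. The paper itself omits the proof, stating it is ``almost immediate'', so there is no argument in the paper to compare against; your staircase construction based on Remark~\ref{rem:mu-C} is exactly the natural way to make the claim precise, and your derivation of the inequality $\sup_{\mathcal P}\{\cdots\}\leq K_C(t)$ is clean.

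On the technical obstacle you flag at the end: the hand-wave about ``atomic components that are absorbed on refinement'' can be replaced by a direct argument. For a nested refining sequence $(\mathcal{P}_n)$ with mesh tending to zero, the residual set $A_t\setminus\bigcup_n S(\mathcal{P}_n)$ is contained (apart from the strip $\{0\}\times[0,1]$, which is $\mu_C$-null since $U$ is uniform) in the countable union of vertical segments $\{u_j\}\times\bigl(v(u_j^+),\,v(u_j)\bigr]$ sitting over the jump points $u_j$ of the non-increasing map $u\mapsto v(u)$. Because the first marginal of $\mu_C$ is uniform on $[0,1]$ and hence non-atomic, every vertical line $\{u\}\times[0,1]$ carries zero $\mu_C$-mass; the residual set is therefore $\mu_C$-null, and monotone convergence gives $\mu_C\bigl(S(\mathcal{P}_n)\bigr)\uparrow\mu_C(A_t)=K_C(t)$. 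Possible mass on the level curve $\{C=t\}$ is not an issue here because that curve is already contained in $A_t$ and, away from the jump verticals, is captured by the closed rectangles $(u_i,u_{i+1}]\times[0,v_i]$ as the mesh shrinks.
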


\begin{definition}[Positive (Negative) Kendall Dependence]\label{def:PKD}
A copula $D$ is \emph{Positive Kendall Dependent} (\emph{PKD}) when
$$
K_D(t) \geq K_\Pi(t)= t - t \log(t), \quad t\in (0,1),
$$
while $D$ is \emph{Negative Kendall Dependent} (\emph{NKD}) when the reverse inequality holds
Two random variables $X,\,Y$ are Positive (Negative) Kendall Dependent  (\emph{PKD/NKD}) when their connecting copula $C$ is \emph{PKD/NKD}.
\\
Two random variables $X,\,Y$ are \emph{Positive (Negative) upper-orthant Kendall Dependent}  (\emph{PuoKD/NuoKD}) when their survival copula $\widehat{C}$ is \emph{PKD/NKD}.
\\
\end{definition}

\begin{remark}\label{rem:PKD}
We notice that the  \emph{PKD} property for the connecting copula $C$ and for the survival copula $\widehat{C}$ give rise to different dependence properties for the random variables $X$, $Y$.
\end{remark}

In the case of exchangeability,  a  survival function  $\overline{F}(x,y)$ takes the form
\begin{equation}\label{Fbar-con-C-hat-exch}
\overline{F}(x,y)= \widehat{C}(\overline{G}(x), \overline{G}(y)),
 \end{equation}
 where $\widehat{C}$ is an exchangeable copula and $\overline{G}$ denotes the common marginal survival function. Restricting our attention to this case we also associate  to $\overline{F}(x,y)$  a
function that describes the family of its level curves, according to the following definition (see~\cite{BasSpiz01}).
\begin{definition}[Bivariate ageing function] \label{def:Biv-ageing}
The function
$$B:\left[ 0,1\right]^2\rightarrow\left[ 0,1\right] ,$$  defined by
\begin{equation}\label{def:B}
B(u,v):=\exp\{-\overline{G}^{-1}\left( \overline{F}
\left( -\log u,-\log v\right) \right) \},
\end{equation}
is called bivariate ageing function of $\overline{F}(x,y)$.
\end{definition}
Taking into account the expression~(\ref{Fbar-con-C-hat-exch}) of the survival function we can also rewrite~(\ref{def:B}) as
\begin{equation}
B(u,v)=\exp\{-\overline{G}^{-1}\left(  \widehat{C}\big(\overline{G}(-\log u),\overline{G}(-\log
v)\big)\right)
\}. \label{DefB-tramite-hatC-e-G-bar}
\end{equation}
By setting
\begin{align}\label{def:gamma}
\gamma (u)&= \exp\{-\overline{G}^{-1}(u) \},
\intertext{whence}
\gamma^{-1} (z)&= \overline{G}(-\log(z)),\label{def:gamma-a-1}
\intertext{we can also write}
B(u,v)&=\gamma\big(\widehat{C}(\gamma^{-1}(u),\gamma^{-1}(v) \big).\label{def:B-con-C-hat-e-gamma}
\end{align}
From~(\ref{def:B}) one immediately obtains
\begin{equation}\label{F-2--Gbar-lnB}
\overline{F}
(x,y)= \overline{G}\big(-\log \big( B(e^{-x}, e^{-y})\big) \big).
\end{equation}
Any other  survival function $\overline{M}(x,y)$ sharing  with $\overline{F}(x,y)$ the same family of level curves, must also share the same  ageing function $B$. Therefore $\overline{M}(x,y)$ must be of the following form
\begin{equation*}
\overline{M}(x,y)= \overline{H}\big(-\log \big( B(e^{-x}, e^{-y})\big) \big),
\end{equation*}
for a marginal survival function $\overline{H}$. Nevertheless, for  arbitrary $\overline{H}$, it is not guaranteed that $\overline{H}\big(-\log \big( B(e^{-x}, e^{-y})\big) \big)$ is a bona-fide survival function. In~\cite{NapSpiz09} (see Theorem~2, therein),   conditions on $\overline{H}$ have been given to guarantee that  $\overline{H}\big(-\log \big( B(e^{-x}, e^{-y})\big) \big)$ is actually a survival function (see Remarks~10 and~24 therein).
\\
Furthermore, like it happens for any copula, $B$ is a $\left[ 0,1\right] $-valued function,
defined over $\left[ 0,1\right]^2$,  increasing in each variable, and it is such
that
\begin{equation}\label{def:semicopula-marg}
B(w,1)=B(1,w)=w, \quad B(w,0)=B(0,w)=0,   \forall\; w\in\left[ 0,1\right] .
\end{equation}
However the function $B$ is not always a copula. A bivariate copula, in fact, must
have the properties of a bivariate probability distribution
function. Whereas, on the contrary, examples of ageing functions $B$ can be given such that%
\begin{equation}\label{def:semicopula}
B(u^{\prime},v^{\prime})-B(u,v^{\prime})-B(u^{\prime},v)+B(u,v)<0
\end{equation}
for some values $0<u<u^{\prime}<1,0<v<v^{\prime}<1$.
\\
Nevertheless,  $B$ turns out to be a copula in some special cases.
\begin{example}\label{example:Schurconstant}
Let us consider the remarkable case of  \emph{Schur-constant} $\overline{F}(x,y)$ survival functions with marginal $\overline{G}$, i.e.,
$$
\overline{F}(x,y)= \overline{G}(x+y),
$$
(see~\cite{Spiz01}).
In such a case the function $B$ does coincides with the product copula $\Pi(uv)=uv$.\\
\end{example}

 In the limiting case of  \textquotedblleft comonotone" dependence, (see Example~\ref{example:Comonotone-supermigrative})
 one has
$$
	B(u,v)=\widehat{C}(u,v)=M(u,v)=u\wedge v.
$$
Thus  $B(u,v)$ is a supermigrative copula.
\\

The term \emph{semi-copula} has been proposed to designate functions which, like the ageing function $B$ above, are increasing in each variable, satisfy the margin conditions~(\ref{def:semicopula-marg}), and are such that the inequality in~(\ref{def:semicopula}) may hold for some values $0<u<u^\prime<1$,  $0<v<v^\prime<1$. For a more formal definition of semi-copula, basic properties, extensions and technical details about semi-copulas and ageing functions, also in a
multivariate context, see in particular, the papers~\cite{BasSpiz01},
\cite{BasSpiz03}, \cite{BasSpiz05a}, \cite{DurSem05-K}, \cite{DurSem05-I}, \cite{NapSpiz09},
\cite{DurSpiz10}, \cite{DurFosSpiz10}, and the book~\cite{DurSemBook2016} with
references cited therein. We point out that the class of   bivariate ageing functions is strictly contained in the one of semi-copulas (see, e.g.,~\cite{DurSemBook2016}).
\\
For our purposes, we only need to remind that it is useful to extend to
semi-copulas definitions of stochastic dependence that have been formulated
for copula functions.
In particular, it is immediate to define the property of Supermigrativity (Submigrativity) for any semi-copula $S$:
We say that $S$ is supermigrative (submigrative)  iff
\begin{equation}\label{Super-Sub-migrat-B}
 S(us, v) \geq (\leq) S(u,sv), \quad \text{whenever } 0\leq v \leq u \leq 1, \quad s \in (0,1).
 \end{equation}
Further dependence properties for semi-copulas, still relevant for our arguments, are the PKD/NKD properties. Giving this definition requires   an appropriate extension of the concept of Kendall distribution to semi-copulas. An extension had been  introduced for a special class of bivariate ageing functions in~\cite{NapSpiz09}. The latter extension
 can also be seen as a particular case of generalized Kendall distributions for semi-copulas, that  we introduce next, in analogy with Lemma~\ref{lemma:KC-con-sums}.
  \begin{definition}\label{Def:Kendall-semicopula}
  Let $S: \, [0,1]^2 \rightarrow [0,1]$ be a semi-copula, and set
$$
S^{-1}_u(t):= \sup\{v:\; S(u,v)\leq t\},
$$
the generalized inverse of $v\mapsto S(u,v)$,  with the convention that $\sup(\emptyset)=0.$
\\ The \emph{generalized Kendall distribution} associated to $S$ is the function \mbox{ $K_S:\; [0,1] \rightarrow \mathbb{R}^+$,} defined by
\begin{align}\label{eq:KS-sup}
K_S(t)&:=\sup_{\mathcal{P}} \Big\{\sum_{i\in I} [S(u_{i+1},v_i)-S(u_i,v_i)], \; \text{with }\; v_i=S^{-1}_{u_{i+1}}(t)\Big\},
\end{align}
where $ \mathcal{P}$ is the class of all finite partitions of~$[0,1]$, of the form $\{u_i, \, i\in I\}$, such that $u_0=0$ and  $u_i\leq u_{i+1}$.
  \end{definition}
  \begin{example}\label{example:KS_phi}
  Let $S=S_\varphi$, with $\varphi(t)$ strictly decreasing, differentiable, and such that $\varphi(1)=0$. Furthermore assume that $\varphi^\prime(t)>0$ in $(0,1)$.  Then
  $$
  K_S(t)= t-\frac{\varphi(t)}{\varphi^\prime(t)}.
  $$
  When $\varphi(t)$ is also convex, then $S$ is a copula and  this result is well known. Moreover, by Lemma~\ref{lemma:KC-con-sums}, when $S$ is a copula $K_S(t)$  can be computed by~(\ref{eq:KS-sup}), i.e., it holds
  $$
 t-\frac{\varphi(t)}{\varphi^\prime(t)}= \sup_{\mathcal{P}} \Big\{\sum_{i\in I} [S_\varphi(u_{i+1},v_i)-S_\varphi(u_i,v_i)], \; \text{with }\; v_i=(S_\varphi)^{-1}_{u_{i+1}}(t)\Big\}.
  $$
   The general case can be deduced observing that  the latter equality  also holds when $\varphi$ is not convex.
  \end{example}
We notice that, with this new definition, the generalized Kendall distribution has the properties
 \begin{equation}\label{eq:Ks-geq-t}
K_S(t)\geq t, \quad t \in [0,1],  \qquad K_S(0)=0,\qquad K_S(1)=1,
\end{equation}
 for any semi-copula $S$ as it happens for copulas. Indeed, first of all,  w.l.o.g., we may consider only partitions containing $t$ as an element and observe that, when $u_{i+1}\leq t$ then $v_i=S^{-1}_{u_{i+1}}(t) =1$, so that $S(u_{i+1},v_i)-S(u_i,v_i)=u_{i+1}-u_i$. Therefore 
\begin{align}\label{eq:KS-sup-t+}
K_S(t)&:=t+\sup_{\mathcal{P}} \Big\{\sum_{ \underset{u_{i+1}> t}{i\in I}} [S(u_{i+1},v_i)-S(u_i,v_i)], \; \text{with }\; v_i=S^{-1}_{u_{i+1}}(t)\Big\},
\end{align}
and~(\ref{eq:Ks-geq-t}) follows  by observing that the increments $S(u_{i+1},v_i)-S(u_i,v_i)$ are non-negative. The other two properties are obvious.
 \\
Furthermore we point out that, when $S$ is a Lipschitz semi-copula, i.e.,
$$
|S(u,v)-S(u^\prime, v^\prime)| \leq L_S \big( |u-u^\prime|+  |v-v^\prime|\big),
$$
 then
  $$
  K_S(t)\leq L_S \quad \text{in $[0,1]$.}
  $$
  Indeed in latter case one has  $0\leq S(u_{i+1},v_i)-S(u_i,v_i)\leq u_{i+1}-u_i$.
   In particular, when~$L_S=1$ , i.e., $S$~is a quasi-copula (see, e.g., \cite{DurSemBook2016})  then $K_S(t)\leq 1$.
 \\ Finally we observe that, when $S$ is not a copula,
  $K_S(t)$ may not be a probability distribution function as shown by the following example.
 \begin{example}
 Let $S=S_\varphi$, with
 $$
 \varphi(t)=\cos(\tfrac{\pi}{2}\, t),
 $$
 i.e., by taking into account Example~\ref{example:KS_phi},
 $$
 S_\varphi(u,v):= \arccos\big( \tfrac{2}{\pi}\, [\cos(\tfrac{\pi}{2}\, u) + \cos(\tfrac{\pi}{2}\, v)]\big).
 $$
 Then
 $$
K_S(t)=t-\tfrac{2}{\pi}\,\frac{\cos(\tfrac{\pi}{2}\, t)}{-\sin(\tfrac{\pi}{2} \,t) }=t+\tfrac{2}{\pi}\,\frac{\cos(\tfrac{\pi}{2}\, t)}{\sin(\tfrac{\pi}{2} \,t) },\quad t\in (0,1),
$$
is not a distribution function, since, in particular,  $K_S(t)$ converge to $\infty$ as $t$ goes to $0$.
 \end{example}

 In Section~\ref{section:IFRA-PKD}  we provide a formula for the generalized Kendall distribution of an ageing function $B(u,v)=\gamma\big(\widehat{C}(\gamma^{-1}(u),\gamma^{-1}(v) \big)$ in terms of $K_{\widehat{C}}(t)$ and of $\gamma(t)$ (see Proposition~\ref{prop:K-di-C-hat---K-di-B}).

\subsection{Archimedean copulas and semi-copulas}\label{subsec:archimedean}
 One important class of copulas is the one of Archimedean copulas:\\
Let $\phi:(0,1]\rightarrow\lbrack0,+\infty)$ be a continuous, convex, and
decreasing function and denote by $C_{\phi}$ the bivariate \textit{Archimedean
copula} with \emph{additive generator}~$\phi$. Namely, for $0\leq u,v\leq1$, set%
\[
C_{\phi}(u,v):=\phi^{-1}\left[ \phi(u)+\phi(u)\right].
\]
It is convenient, though not strictly necessary, to assume~$\phi$ strictly decreasing and such that
\[
\lim_{u\rightarrow0^{+}}\phi\left( u\right) =+\infty,\quad \phi\left( 1\right)
=0,
\]
so that the function~$\phi^{-1}$ can be identified with a one-dimensional
survival function of the type that we are considering here.
\\
Two basic examples of Archimedean copulas are the independent copula  $\Pi(u,v)=uv$, where $\phi(u)=-\log(u)$, and the survival  copula of a Schur-constant survival function as in Example~\ref{example:Schurconstant}, where $\phi(u)= \overline{G}^{-1}(u)$.

Similarly, we say that a semi-copula $S$ is \textit{Archimedean} when it has the form
\[
S_{\varphi}(u,v):=\varphi^{-1}\left[ \varphi(u)+\varphi(u)\right],
\]
where $\varphi:(0,1]\rightarrow\lbrack0,+\infty)$ is a continuous decreasing function, not necessarily convex. We maintain the assumptions
\[
\lim_{u\rightarrow0^{+}}\varphi\left( u\right) =+\infty,\quad \varphi\left( 1\right)
=0.
\]

Since $C_{\phi}$ is a copula, and $S_{\varphi}$ is a semi-copula it makes sense to consider dependence properties
of them. Since $\phi^{-1}$  and $\varphi^{-1}$ are, technically,   one-dimensional survival functions,
it makes sense to consider ageing properties of them. On the other hand, the inverse of any one-dimensional survival function~$\overline{G}$, if it exists, can be seen as the generator of an Archimedean semi-copula $S_{G^{-1}}$, and it makes sense to consider the bivariate ageing of this semi-copula.

In particular, to the marginal survival function $\overline{G}$ of an exchangeable bivariate survival function $\overline{F}(x,y)$,  it will be convenient to associate the Archimedean semi-copula
\begin{equation}\label{def:copula-Schur-const}
S_{\overline{G}^{-1}}(u,v)= \overline{G}\big(\overline{G}^{-1}(u)+\overline{G}^{-1}(v)\big).
\end{equation}
\begin{remark}\label{rem:Schurconstant}
When $\overline{G}$,  and then $\overline{G}^{-1}$,  is convex, the above Archimedean semi-copula $S_{\overline{G}^{-1}}$ is a copula. Actually $S_{\overline{G}^{-1}}$ is the survival copula of the Schur-constant model in the Example~\ref{example:Schurconstant}.
\end{remark}

In the  particular case of an exchangeable bivariate survival function $\overline{F}(x,y)$ with an Archimedean survival copula $\widehat{C}=C_\phi$, one has
\begin{equation}\label{-F-bar-arch-exchang}
\overline{F}(x,y)=\phi^{-1}\left[ \phi(\overline{G}(x))+\phi(\overline{G}
(y))\right],
\end{equation}
where, as usual, $\overline{G}$ denotes the marginal survival function.
 \begin{remark}\label{rem:B-Archimedean}
 When the survival function $\overline{F}(x,y)$ has the form~(\ref{-F-bar-arch-exchang}), the  ageing function $B(u,v)$  is Archimedean as well. More precisely,  (\ref{DefB-tramite-hatC-e-G-bar}) takes the special form
\begin{align}
B(u,v)&=\exp\{-\overline{G}^{-1}\left( \phi^{-1}\left[ \phi(\big(\overline{G}(-\log u)\big) + \phi\big(\overline{G}(-\log
v)\big)\right]\right)\notag
\}
\intertext{namely}
B(u,v)&= S_\varphi(u,v),
 \label{DefB-tramite-hatC-e-G-bar-ARCHIMEDEAN}
\intertext{where}
\varphi(u)&=\phi\big(\overline{G}(-\log u)\big). \label{varphi}
\end{align}
 \end{remark}

The afore-mentioned dependence properties PQD, LTD and PKD for Archimedean copulas have been investigated in~\cite{AveDort04}. (In~\cite{AveDort04}  and in~\cite{MulScar05} also other positive dependence properties for  Archimedean copulas have also been considered.)
In~\cite{BasSpiz05a} such  dependence properties  for copulas have been considered also for semi-copulas which are not necessarily Archimedean.  We notice  that the PKD property has been however considered therein only the Archimedean case.
 In Section~\ref{section:IFRA-PKD}  we will  extend the results obtained in~\cite{BasSpiz05a}  to a larger class of copulas and  bivariate ageing functions.
\section{Relating Dependence to Ageing Properties: the IFR/DFR case}\label{section:sub-super-migrativity-IFR}

As announced at the end of Section~\ref{section:exchangeable}, we continue to analyze the case of exchangeable variables $X_1,...,X_n$, with  two-dimensional marginal survival function $\overline{F}^{(2)}$  given in~(\ref{F-2}). We denote by $\overline{G}$ the marginal survival function and by $\widehat{C}(u,v)$ the survival copula.
 We are now in a position to start with the discussion about the positive/negative migrativity dependence properties PMD/NMD of super/submigrativity (see~(\ref{SuprMigrtv}), (\ref{SubMigrtv})), univariate ageing properties  IFR/DFR of increasing/decreasing failure rate, and the bivariate positive/negative ageing conditions biv-IFR/biv-DFR (see~(\ref{BivIFRforX}), (\ref{BivDFRforX})).
 The following simple result shows the interrelations among such notions, and can be obtained as a consequence of the general results given in Section 5 of~\cite{BasSpiz05a}. We preferred to state this specific result for different reasons. First of all it admits a self-contained  proof, as given below. Furthermore it provides the reader with a paradigmatic scheme of those  results,
and will  help us to explain in the next section the general idea behind them.
\begin{proposition}\label{propBIS:super/sub/migrative}
${}$
\begin{enumerate}
\item[(i)] If $\overline{G}$ is \emph{IFR} and $\widehat{C}$
 is \emph{PMD}, then $\overline{F}^{(2)}$ is Bayesian \emph{biv-IFR}
\item[(ii)] If $\overline{G}$ is \emph{DFR} and $\widehat{C}$
is \emph{NMD}, then $\overline{F}^{(2)}$ is Bayesian \emph{biv-DFR}
\item[(iii)] If $\overline{F}^{(2)}$ is Bayesian \emph{biv-IFR}  and $\overline{G}$ is \emph{DFR}, then $\widehat{C}$
is \emph{PMD} 
\item[(iv)] If $\overline{F}^{(2)}$ is Bayesian \emph{biv-DFR}  and $\overline{G}$ is \emph{IFR}, then $\widehat{C}$
is  \emph{NMD} 
\item[(v)] If $\widehat{C}$
is  \emph{PMD} 
and $\overline{F}^{(2)}$ is Bayesian \emph{biv-DFR}, then $\overline{G}$ is \emph{DFR}
\item[(vi)] If $\widehat{C}$
is \emph{NMD} 
and $\overline{F}^{(2)}$ is Bayesian \emph{biv-IFR}, then $\overline{G}$
is \emph{IFR}

\end{enumerate}
\end{proposition}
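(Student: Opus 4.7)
My approach is to unify the three kinds of conditions (univariate ageing of $\overline{G}$, bivariate ageing of $\overline{F}^{(2)}$, and migrativity of $\widehat{C}$) through a single parametrization, and then to prove each of the six items by a short two-step chain based on the monotonicity of $\widehat{C}$.

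First, after cancelling the common factor $\overline{F}^{(2)}(x,y)$ and exploiting exchangeability (so that $\overline{F}^{(2)}$ is symmetric), condition~(\ref{BivIFRforX}) is equivalent to the Schur-concavity of $\overline{F}^{(2)}$, namely
\[
\overline{F}^{(2)}(x+t,y)\ \geq\ \overline{F}^{(2)}(x,y+t),\qquad 0\leq x\leq y,\ t\geq 0,
\]
and analogously~(\ref{BivDFRforX}) is the reverse (Schur-convexity). Writing $\overline{F}^{(2)}(x,y)=\widehat{C}(\overline{G}(x),\overline{G}(y))$ and introducing
\[
u=\overline{G}(x),\quad v=\overline{G}(y),\quad s_{1}=\tfrac{\overline{G}(x+t)}{\overline{G}(x)},\quad s_{2}=\tfrac{\overline{G}(y+t)}{\overline{G}(y)},
\]
one has $0<v\leq u\leq 1$ and $s_{1},s_{2}\in(0,1]$. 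In this dictionary, biv-IFR becomes $\widehat{C}(us_{1},v)\geq\widehat{C}(u,vs_{2})$ (biv-DFR reverses it), IFR of $\overline{G}$ becomes $s_{1}\geq s_{2}$ (DFR reverses), and PMD of $\widehat{C}$ becomes $\widehat{C}(us,v)\geq\widehat{C}(u,vs)$ for all $u\geq v$ and $s\in(0,1)$ (NMD reverses). Because $\overline{G}$ is strictly decreasing and continuous, the map $(x,y,t)\mapsto(u,v,s_{1})$ is surjective onto $\{(u,v,s):0<v\leq u\leq 1,\,0<s\leq 1\}$, so pointwise deductions at the level of $(u,v,s)$ translate into the full conditions.

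Each item is then a two-step chain using only monotonicity of $\widehat{C}$ in its two variables. For (i), IFR ($s_{1}\geq s_{2}$) yields $\widehat{C}(us_{1},v)\geq\widehat{C}(us_{2},v)$, and PMD applied at $s=s_{2}$ yields $\widehat{C}(us_{2},v)\geq\widehat{C}(u,vs_{2})$, whence biv-IFR; item (ii) is the mirror. For (iii), biv-IFR gives $\widehat{C}(us_{1},v)\geq\widehat{C}(u,vs_{2})$, and DFR combined with monotonicity in the second argument gives $\widehat{C}(u,vs_{2})\geq\widehat{C}(u,vs_{1})$; chaining yields PMD at $s=s_{1}$, and surjectivity extends this to all $s\in(0,1)$. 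Item (iv) is analogous with inequalities reversed. For (v), PMD gives $\widehat{C}(us_{1},v)\geq\widehat{C}(u,vs_{1})$ while biv-DFR gives $\widehat{C}(us_{1},v)\leq\widehat{C}(u,vs_{2})$, whence $\widehat{C}(u,vs_{1})\leq\widehat{C}(u,vs_{2})$, which forces $s_{1}\leq s_{2}$, i.e.\ DFR; item (vi) is the symmetric argument.

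The only delicate point lies in (v) and (vi), where one has to deduce an ordering between $s_{1}$ and $s_{2}$ from a comparison of two values of $\widehat{C}(u,\cdot)$. This step implicitly requires strict monotonicity of $v\mapsto\widehat{C}(u,v)$ for $u>0$, a mild nondegeneracy that is natural in the present setting together with the standing strict monotonicity of $\overline{G}$. Items (i)--(iv), by contrast, use only the weak monotonicity of $\widehat{C}$ in each argument and so go through without any extra hypothesis.
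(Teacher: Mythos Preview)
Your proof is correct and follows essentially the same approach as the paper: the paper explicitly proves only item~(i) via the same two-step chain (PMD applied at $s=s_1$ followed by monotonicity and IFR, whereas you apply monotonicity with IFR first and then PMD at $s=s_2$; both orderings work), and then claims the remaining items ``can be proven similarly.'' Your unified $(u,v,s_1,s_2)$ parametrization and the surjectivity observation for items~(iii)--(iv) make the ``similarly'' explicit in a clean way.

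Your remark about items~(v) and~(vi) is a genuine observation that the paper does not address: deducing $s_1\leq s_2$ from $\widehat{C}(u,vs_1)\leq\widehat{C}(u,vs_2)$ does require that $v\mapsto\widehat{C}(u,v)$ be strictly increasing on the relevant interval, and the paper neither assumes this nor offers an alternative argument. Under the standing hypotheses of the paper (only strict monotonicity of~$\overline{G}$ is assumed, nothing on~$\widehat{C}$), your flagged nondegeneracy condition is indeed needed for those two items, so your caveat is apt rather than a defect in your proof.
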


\begin{proof}
It is sufficient to prove the implication in item $(i)$. The other implications
can be proven similarly.
\\

Fix $0\leq x<y,t>0$, so that $\overline{G}\left( x\right) >$ $\overline
{G}\left( y\right) $. Set, furthermore
\[
s:=\frac{\overline{G}\left( x+t\right) }{\overline{G}\left( x\right) },
\]
so that
\[
\frac{\overline{G}\left( y+t\right) }{\overline{G}\left( x\right) }\leq
s\text{ for }\overline{G}\; \text{ IFR}.
\]
By reminding (\ref{Fbar-con-C-hat}) and that $\widehat{C}$ is  supermigrative  we get %
\begin{align*}
\mathbb{P}\left( X_{i}>x+t|X_{i}>x,X_{j}>y\right)
&=\frac{\overline
{F}^{\left( 2\right) }(x+t,y)}{\overline{F}^{\left( 2\right) }(x,y)}%
\\=\frac{
\widehat{C}\left( \overline{G}(x+t),\overline{G}(y)\right) }{\overline
{F}^{\left( 2\right) }(x,y)}
&=\frac{
\widehat{C}\left( \overline{G}(x)\cdot s,\overline{G}(y)\right) }{\overline
{F}^{\left( 2\right) }(x,y)}\geq
\frac{
\widehat{C}\left( \overline{G}(x),\overline
{G}(y)\cdot s\right) }{\overline{F}^{\left( 2\right) }(x,y)} \phantom{\frac{\bigg|}{\bigg|}}
\intertext{By using the assumption of $\overline{G}$ IFR, we have ${G}(y)\cdot s\geq \overline{G}(y+t)$.  Furthermore, $\widehat{C}$~being a copula   and then non-decreasing   in  each variable,  we can conclude}
\mathbb{P}\left( X_{i}>x+t|X_{i}>x,X_{j}>y\right)&\geq
\frac{
\widehat{C}\left( \overline{G}(x),\overline
{G}(y)\cdot s\right) }{\overline{F}^{\left( 2\right) }(x,y)}
\\\geq\frac{\widehat{C}\left( \overline{G}(x),\overline{G}(y+t)\right) }{\overline
{F}^{\left( 2\right) }(x,y)}\phantom{\frac{\bigg|}{\bigg|}}
&=\mathbb{P}\left( X_{j}>y+t|X_{i}>x,X_{j}>y\right) .
\end{align*}
\end{proof}
\bigskip

 In Proposition~\ref{propBIS:super/sub/migrative}, for a joint exchangeable survival function $\overline{F}_{\boldsymbol{X}}$ attention has been fixed  on the two-dimensional marginal survival function $\overline{F}^{(2)}$  and on the pair $(\widehat{C},\overline{G})$ of the two-dimensional survival copula and   the marginal survival function, so that the  $\overline{F}^{(2)}$ is given as in~\eqref{Fbar-con-C-hat-exch}.
 \\
  It is convenient however  to rephrase that result in terms of the triple~$(\widehat{C},\overline{G}, B)$, where $B$ is the bivariate ageing function of $\overline{F}^{(2)}$ (see (\ref{def:B})). On this purpose we recall a result connecting the Bayesian biv-IFR property, with the PMD property $B$ (see~\cite{BasSpiz05a}). For the readers' convenience, we  provide an autonomous proof adapted to our language and notation.

\begin{lemma}\label{lemma-Equiv-SCHUR} The following three conditions are equivalent:
\\
(a) $\overline{F}^{(2)}$ is \emph{Bayesian biv-IFR}, i.e., condition (\ref{BivIFRforX}) holds
\\
(b) $\overline{F}^{\left(
2\right)}$  is Schur-concave
\\
(c) $B$   is \emph{PMD} 
\\
Analogously also the following dual conditions are equivalent
\\
(a') $\overline{F}^{(2)}$ is \emph{Bayesian biv-DFR}, i.e., condition (\ref{BivDFRforX}) holds
\\
(b') $\overline{F}^{\left(
2\right)}$  is Schur-convex
\\
(c') $B$ is \emph{NMD} 
\\
\end{lemma}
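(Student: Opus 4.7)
The plan is to prove the chain $(a) \Leftrightarrow (b) \Leftrightarrow (c)$, and then observe that the dual statements follow by reversing every inequality in the argument.

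First I would handle $(a) \Leftrightarrow (b)$ by an almost purely notational rewriting. The Bayesian biv-IFR inequality (\ref{BivIFRforX}), divided through by the common conditioning probability $\overline{F}^{(2)}(x,y)$, is equivalent to
\[
\overline{F}^{(2)}(x+t,y) \;\geq\; \overline{F}^{(2)}(x,y+t), \qquad 0\leq x<y,\ t\geq 0,
\]
which is exactly the Schur-concavity of $\overline{F}^{(2)}$ as formulated in (\ref{def-SCHUR-conc-convex}). Strict positivity of $\overline{G}$ on $[0,+\infty)$ guarantees that the conditioning probability is non-zero, so no technicality arises.

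Next I would prove $(b) \Leftrightarrow (c)$ via the representation (\ref{F-2--Gbar-lnB}), namely
\[
\overline{F}^{(2)}(x,y) \;=\; \overline{G}\!\left(-\log B(e^{-x},e^{-y})\right).
\]
Since $\overline{G}$ is strictly decreasing, Schur-concavity of $\overline{F}^{(2)}$ is equivalent to
\[
B(e^{-(x+t)}, e^{-y}) \;\geq\; B(e^{-x}, e^{-(y+t)}), \qquad 0\leq x\leq y,\ t\geq 0.
\]
Setting $u=e^{-x}$, $v=e^{-y}$, $s=e^{-t}$, the range $0\leq x\leq y$ translates into $0<v\leq u\leq 1$, the range $t\geq 0$ becomes $s\in(0,1]$, and the inequality becomes
\[
B(us,v) \;\geq\; B(u,vs), \qquad 0\leq v\leq u\leq 1,\ s\in(0,1),
\]
which is precisely the PMD (supermigrativity) condition (\ref{Super-Sub-migrat-B}) for the semi-copula $B$. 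Conversely, the same change of variables, read the other way, recovers Schur-concavity from PMD of $B$, so the equivalence is established.

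Finally, the dual chain $(a')\Leftrightarrow(b')\Leftrightarrow(c')$ is obtained by reversing every inequality in the arguments above: the biv-DFR condition (\ref{BivDFRforX}) rewrites as Schur-convexity of $\overline{F}^{(2)}$, and Schur-convexity translates under the same substitution into the submigrativity condition, i.e.\ the NMD property of $B$. I do not anticipate any real obstacle in this proof; it is essentially a matter of tracking the change of variables $(x,y,t)\mapsto(u,v,s)$ through the monotone transformation $\overline{G}$, and the only place one needs to be mildly careful is in checking that the domain $0\leq v\leq u\leq 1$ corresponds exactly to $0\leq x\leq y$ after the substitution.
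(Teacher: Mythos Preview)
Your proposal is correct and follows essentially the same argument as the paper's own proof: both rewrite the biv-IFR inequality as Schur-concavity by clearing the common denominator, then use the representation $\overline{F}^{(2)}(x,y)=\overline{G}(-\log B(e^{-x},e^{-y}))$ together with the monotonicity of $\overline{G}$ and the substitution $u=e^{-x}$, $v=e^{-y}$, $s=e^{-t}$ to identify Schur-concavity with supermigrativity of $B$. The only cosmetic difference is that the paper phrases the monotonicity step in terms of the increasing map $\xi\mapsto\overline{G}(-\log\xi)$ rather than of $\overline{G}$ alone.
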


\begin{proof}
For $0\leq x<y$, the inequality (\ref{BivIFRforX}) just means
\[
\overline{F}^{\left( 2\right) }(x+t,y)\geq\overline{F}^{\left( 2\right)
}(x,y+t), \quad \text{for any $t \geq 0$,}
\]
namely, in view of exchangeability, the condition~(\ref{def-SCHUR-conc-convex}) of Schur-concavity  for $\overline{F}^{\left( 2\right)}$, i.e.,    condition~$(a)$ is equivalent to condition~$(b)$. It only remains to prove that condition~$(b)$ is equivalent to condition~$(c)$. Indeed, taking into account~(\ref{F-2--Gbar-lnB}), and the fact that the function $\xi \mapsto\overline{G}(-\log( \xi))$ is  increasing, the above Schur-concavity condition is equivalent to
$$
B(e^{-(x+t)}, e^{-y}) \geq B(e^{-x}, e^{-(y+t)}),\quad \text{for any $t \geq 0$, and $0\leq x<y$.}
$$
Setting $u=e^{-x}$, $v=e^{-y}$ and $s=e^{-t}$, the above condition is equivalent to  $B$ being PMD , i.e., to the Supermigrativity condition~(\ref{Super-Sub-migrat-B}) for~$B$.
\\
The proof of the equivalence of the other conditions is similar.
\end{proof}
${}$
\\
\begin{remark}\label{rem:scur-Bayes-biv-closed-mixture}
Notice that the properties (a), (b), (a'), (b')  defined in the above Lemma~\ref{lemma-Equiv-SCHUR} are closed under mixture.
\end{remark}

It is important to stress that the previous result shows that   the property of Supermigrativity (Submigrativity) for the ageing function~$B$ is a bivariate ageing property. 
\\
 In view of the previous equivalences, we are now in a position to rephrase the result in Proposition~\ref{propBIS:super/sub/migrative}
 in terms of the survival copula $\widehat{C}$, the marginal survival function~$\overline{G}$ and the bivariate ageing function~$B$, as follows.
\begin{proposition}\label{cor:super/sub/migrative}${}$
\begin{enumerate}
\item[(i)] If $\overline{G}$ is \emph{IFR} and $\widehat{C}$ is \emph{PMD},
  then~$B$ is \emph{PMD} 
\item[(ii)] If $\overline{G}$ is \emph{DFR} and $\widehat{C}$
is \emph{NMD}, 
then~$B$ is  \emph{NMD} 
\item[(iii)] If $B$ is \emph{PMD} 
and $\overline{G}$ is \emph{DFR}, then $\widehat{C}$
is \emph{PMD} 
\item[(iv)] If $B$ is \emph{NMD} 
 and $\overline{G}$ is \emph{IFR}, then $\widehat{C}$
is \emph{NMD} 
\item[(v)] If $\widehat{C}$
is \emph{PMD} 
and $B$ is \emph{NMD}, 
then $\overline{G}$ is \emph{DFR}
\item[(vi)] If $\widehat{C}$
is \emph{NMD} 
and $B$ is \emph{PMD}, 
then $\overline{G}$
is \emph{IFR}
\end{enumerate}
\end{proposition}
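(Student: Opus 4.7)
The plan is to recognize that Proposition~\ref{cor:super/sub/migrative} is a mechanical rewriting of Proposition~\ref{propBIS:super/sub/migrative} through the equivalence furnished by Lemma~\ref{lemma-Equiv-SCHUR}. Concretely, Lemma~\ref{lemma-Equiv-SCHUR} tells us
\[
\overline{F}^{(2)}\text{ is Bayesian biv-IFR}\iff B\text{ is PMD},\qquad \overline{F}^{(2)}\text{ is Bayesian biv-DFR}\iff B\text{ is NMD}.
\]
Since both propositions involve the same triple $(\widehat{C},\overline{G},\overline{F}^{(2)})$ — with $\overline{F}^{(2)}$ and $B$ linked by~(\ref{def:B}) and~(\ref{DefB-tramite-hatC-e-G-bar}) — every statement of the form ``$\overline{F}^{(2)}$ is Bayesian biv-IFR'' (resp.\ biv-DFR) in Proposition~\ref{propBIS:super/sub/migrative} can be replaced verbatim by ``$B$ is PMD'' (resp.\ NMD), and the corresponding item of Proposition~\ref{cor:super/sub/migrative} is obtained.

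My execution would be purely bookkeeping: for each of the six items I would identify which occurrence involves $\overline{F}^{(2)}$ and substitute. For items (i) and (ii), the hypotheses on $\overline{G}$ and $\widehat{C}$ are untouched, and the conclusion about $\overline{F}^{(2)}$ being Bayesian biv-IFR/biv-DFR is rephrased as a statement about $B$ being PMD/NMD. For items (iii) and (iv), one hypothesis about $\overline{F}^{(2)}$ translates to a hypothesis on $B$, while the other hypothesis and the conclusion about $\widehat{C}$ remain unchanged. For items (v) and (vi), the same translation is applied to the hypothesis on $\overline{F}^{(2)}$, leaving the conclusion on $\overline{G}$ intact. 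In each case the equivalence in Lemma~\ref{lemma-Equiv-SCHUR} is used in the appropriate direction, so no additional inequalities need to be established.

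There is essentially no genuine obstacle: the only thing worth verifying is that Lemma~\ref{lemma-Equiv-SCHUR} is applicable in the present exchangeable set-up, which it is by construction, and that the correspondence IFR~$\leftrightarrow$~PMD, DFR~$\leftrightarrow$~NMD matches the pairing used in each of the six implications of Proposition~\ref{propBIS:super/sub/migrative}. A brief final remark would emphasize the conceptual point that this reformulation exhibits the symmetric role played, within the triple $(\widehat{C},\overline{G},B)$, by the univariate ageing of $\overline{G}$, the dependence of $\widehat{C}$, and the bivariate ageing encoded by $B$: knowledge of (appropriate positive/negative versions of) any two of these properties yields the third.
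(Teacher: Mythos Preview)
Your proposal is correct and matches the paper's own treatment: the paper explicitly presents Proposition~\ref{cor:super/sub/migrative} as a direct rephrasing of Proposition~\ref{propBIS:super/sub/migrative} via the equivalences in Lemma~\ref{lemma-Equiv-SCHUR}, without any additional argument.
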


\begin{remark}\label{rem:5.4} In~\cite{AveDort04} it was shown that an Archimedean copula, with a generator~$\phi$ is  \emph{LTI/LTD} if and only if $\phi^{-1}$  is a \emph{IFR/DFR} survival function, respectively. As pointed out in the previous Section~\ref{section:richiami}, it makes sense however, to extend dependence  properties   to semi-copulas. It was observed  in~\cite{BasSpiz05a} that also for an   Archimedean semi-copula  the \emph{LTI/LTD} property  is equivalent to  the \emph{IFR/DFR} property for the inverse of its generator.
In particular $\overline{G}$ is \emph{IFR/DFR} if and only if  the semi-copula $S_{\overline{G}^{-1}}$ (see~(\ref{def:copula-Schur-const}) ) is  \emph{LTI/LTD}. Moreover, again for Archimedean semi-copulas, the \emph{LTI/LTD} property is equivalent to the \emph{NMD/PMD} property (see Remark~\ref{rem:LTD--SuperMigr-ARCH}). We can conclude therefore that the  above result in Proposition~\ref{cor:super/sub/migrative} can   be formulated in terms of the survival copula~$\widehat{C}$, and the semi-copulas $B$ and  $S_{\overline{G}^{-1}}$. For instance item \textit{(i)} can be reformulated as
\begin{equation}
\text{\textit{(i)} \;\; If $S_{\overline{G}^{-1}}$ is \emph{PMD} and $\widehat{C}$ is \emph{PMD},
  then~$B$ is \emph{PMD}. }\qquad \qquad \qquad \text{${}$}
\end{equation}
and similarly for items \textit{(ii)}---\textit{(vi)}.\\

Let us concentrate again our attention on the three semi-copulas~$\widehat{C}$, $B$, and $S_{\overline{G}^{-1}}$:  Summarizing the above arguments, we can claim that   \emph{PMD/NMD} conditions imposed over two semi-copulas imply a condition --- of type either \emph{PMD} or \emph{NMD} --- for the third semi-copula.
\end{remark}

\section{A path to a more general analysis}\label{sec:path}

 As mentioned  in the previous sections, the Supermigrativity  property  for a copula is looked at as a condition of positive
dependence, while the IFR  property of a marginal survival function is a condition of positive one-dimensional
ageing. The terms Submigrativity, DFR respectively refer to the corresponding
dual conditions of negative dependence and negative one-dimensional ageing.
\\
\indent
One can say that Proposition~\ref{cor:super/sub/migrative} concerns with the pro\-perties Super\-migrativity\-/Sub\-migrativity and IFR/DFR  for $\widehat{C}$ and $\overline{G}$, respectively.  It concerns furthermore with   Supermigrativity/Submigrativity of $B$, which has been classified as a  notion of positive/negative bivariate ageing.

Several concepts of stochastic dependence and of ageing have
been considered in the literature. In some cases, results of the same form as
in Proposition~\ref{cor:super/sub/migrative}
can be formulated for other pairs of \lcaporali dependence, one-dimensional ageing\rcaporali properties and notions of \textquotedblleft bivariate ageing",
appropriately corresponding to them.

Informally, we can describe as follows the general format of such results:

\begin{enumerate}
\item[1.] Positive dependence and positive one-dimensional ageing imply positive
bivariate ageing

\item [2.] Positive bivariate ageing and negative one-dimensional ageing imply
positive dependence

\item[3.] Positive bivariate ageing and negative dependence imply positive
one-dimensional ageing.
\end{enumerate}

  Furthermore, dual statements --- where the terms
\textquotedblleft positive" and \textquotedblleft negative" are interchanged --- do hold.

By using the concept of the ageing function $B$, we can say that
\textquotedblleft dependence" is a property of the survival copula $\widehat{C}$, 
\textquotedblleft one-dimensional ageing" is a property of the marginal
survival function $\overline{G}$, and, furthermore \textquotedblleft bivariate ageing" is a property of the ageing function $B$.
See~\cite{BasSpiz05a} for both technical details and some heuristic argument, and~\cite{DurSem05-K}, \cite{DurSem05-I}, \cite{NapSpiz09}, \cite{DurSpiz10}, \cite{Spiz10}, \cite{Spiz14}, \cite{DurSemBook2016}  for further related discussions and results.
\bigskip

In order to setting the above statements in a more precise form, we need to give appropriate answers to the following natural questions:
\\
\indent \textbf{Q1} What are the appropriate
notion of dependence and  property of
one-dimensional ageing, \textquotedblleft corresponding" to each other?
\\
\indent \textbf{Q2} What might be an appropriate notion of bivariate
ageing, \textquotedblleft corresponding" to a pair \lcaporali dependence, one-dimensional
ageing\rcaporali?
\\
\indent  \textbf{Q3} What appropriate meaning should be given to the term \textquotedblleft
corresponding"?
\\
\\
\indent  Finally, assuming that one has given satisfying answers to
the preceding questions, one wonder
\\ -- whether there exists  a general and synthetic method
for proving statements of the same form as in Proposition~\ref{cor:super/sub/migrative}.
In such a case one might
avoid  \textit{ad hoc} strategies,  limited to  specific pairs of \lcaporali dependence, one-dimensional
ageing\rcaporali;
\\ --  whether the notions of bivariate ageing that we are considering are closed under mixtures (see also Remark~\ref{rem:scur-Bayes-biv-closed-mixture}).

\bigskip

Next, we aim to summarize a conceptual path which, for some cases at least,
 suggests   responses to the above questions (see~\cite{BasSpiz05a}). The path starts with the
following three steps a), b), and c).

\bigskip
a)  As also pointed out in Remark~\ref{rem:IFR-univ},   Proposition~\ref{prop:IFR-equiv} ensures that the IFR/DFR
conditions of one-dimensional ageing for $\overline{G}$ can be translated into
inequalities for the bivariate survival function $\overline{F}_\Pi\left(
x,y\right) =\overline{G}\left( x\right) \overline{G}\left(
y\right)$
 of two independent variables distributed according to
$\overline{G}$ (see \emph{(iv)} of Proposition~\ref{prop:IFR-equiv}, see also   \emph{(iii)}, which turns out to be an inequality in view of~(\ref{def-SCHUR-conc-convex}) ).
We can consider, in place of IFR/DFR conditions,  the NBU/NWU conditions for $\overline{G}$, i.e.,
$$
\overline{G}(x+y) \leq (\geq) \overline{G}(x)\overline{G}(y),$$
 or equivalently
\begin{equation}\label{NBU-level}
 \overline{F}_\Pi(x+y,0)\leq (\geq) \overline{F}_\Pi(x,y).
\end{equation}

\bigskip

b) As observed in Remark~\ref{rem:IFR-univ}   the inequalities in the above step  a), concerning the IFR/DFR properties,  can be seen as properties
of the family of the level curves of the function $\overline{F}_\Pi\left(
x,y\right) =\overline{G}\left( x\right) \overline{G}\left(
y\right)$. Also the NBU/NWU properties~(\ref{NBU-level})  is a property of the family of the level curves for $\overline{F}_\Pi$.
Any
exchangeable bivariate survival function $\overline{F}(x,y)$,
which shares with $\overline{F}_\Pi(x,y)$ the family of level curves,
also shares with $\overline{F}_\Pi$ qualitative properties of
bivariate ageing, as first discussed in~\cite{BarSpiz93}, \cite{BasSpiz01},
\cite{Spiz01}.
On the other hand,
we recall that two different survival functions   share the same family of level curves if and only if they share the same function ageing $B$. Whence,  different bivariate ageing properties defined in terms of the family of the level curves of $\overline{F}(x,y)$  turn out to coincide with
\textquotedblleft dependence"
properties of $B$.

\bigskip

c) It is remarkable  that the bivariate ageing properties, mentioned in item~b) and defined in terms of
properties of the ageing function $B$,
actually  coincide with different
\textquotedblleft dependence" properties of $B$ (see~\cite{BasSpiz05a}, see also Remark~\ref{rem:5.4}).
The special case of PMD/NMD is   considered here in Lemma~\ref{lemma-Equiv-SCHUR}.
 The special case  when $B$ is the independent copula $\Pi$ corresponds to a bivariate no-ageing property.

\bigskip

Before continuing our conceptual path, we point out that different papers in the literature
had been devoted to the analysis of  connections between dependence properties of $C_{\phi}$ and ageing properties
of $\phi^{-1}$. Very precise and detailed results had, in particular, been
obtained in~\cite{AveDort04}, \cite{MulScar05}. Some notions of positive
dependence for $C_{\phi}$ have been shown to be equivalent to negative ageing
properties of $\phi^{-1}$.
\\
For the reader's ease, we recall the results in~\cite{AveDort04} concerning the relations between the dependence properties PQD,  PKD, LTD, SI for an Archimedean copula $C_\phi$, and the ageing properties of the survival functions $\phi^{-1}$. For simplicity sake we assume that the generator $\phi$ is differentiable in $(0,1)$ and denote by $f_\phi(x)$ the density of~$\phi^{-1}$.
\begin{align}
C_\phi\text{ is PQD} \quad &\Leftrightarrow \quad  \phi^{-1}(x)\; \text{is NWU;}\label{A-D-PQD}
\\
C_\phi\text{ is PKD} \quad &\Leftrightarrow \quad  \phi^{-1}(x)\; \text{is DFRA;}\label{A-D-PKD}
\\
C_\phi\text{ is LTD} \quad &\Leftrightarrow \quad  \phi^{-1}(x)\; \text{is DFR;}\label{A-D-LTD}
\\
C_\phi\text{ is SI}\phantom{D\,} \quad &\Leftrightarrow \quad  \log (f_\phi)\; \text{ is convex.}\label{A-D-SI}
\end{align}
For the DFRA property, see Definition~\ref{def:IFRA-DFRA} in the next section.
\\

Similar equivalences hold for the corresponding properties of negative dependence and positive ageing.
\\
We recall that  an Archimedean copula is  LTD  if and only if is PMD (see also Remark~\ref{rem:5.4}).
We also notice the following implications between notions of positive univariate negative ageing (see, e.g., \cite{ShaSha2007})
$$
\log (f_\phi)\; \text{convex}\quad \Rightarrow\quad \text{DFR} \quad \Rightarrow\quad \text{DFRA} \quad \Rightarrow\quad \text{NWU}.
$$
Whence one obtains the following implications  among notions of bivariate dependence for Archimedean copulas
$$
\text{SI}  \quad \Rightarrow\quad  \text{LTD} \quad \Rightarrow\quad  \text{PKD}\quad \Rightarrow\quad \text{PQD}.
$$

\bigskip

We can now    resume our conceptual path by means of the following steps e)---h).
\\

e) We associate to the Archimedean copula $C_\phi$ the bivariate survival model in $\mathbb{R}^+\times \mathbb{R}^+$
\begin{equation}\label{Schur-Constant-phi}
\overline{F}(x,y)= C_\phi\big(\phi^{-1}(x), \phi^{-1}(y) \big),
\end{equation}
i.e., the one with survival copula $\widehat{C}= C_\phi$ and marginal survival function~\mbox{$\overline{G}=\phi^{-1}$.} In other words this is the Schur-constant model (see~(\ref{-F-bar-arch-exchang}))
$$
\overline{F}(x,y)=  \overline{G}(x+y).
$$

 The interest of this model is due to the circumstance that  the equivalences~(\ref{A-D-PQD})---(\ref{A-D-SI}) become nothing else but equivalence relations between properties of the survival copula and ones  of the marginal distribution.

Notice that the ageing function $B$ of the above model~(\ref{Schur-Constant-phi}) is  the independent copula $\Pi$  and, in agreement with item c), the condition $B=\Pi$ actually describes a bivariate no-ageing property.

\bigskip

f) We now   consider a survival model, with the same  survival copula $\widehat{C}= C_\phi$ as in item e), but with marginal survival function~\mbox{$\overline{G}$}, different from $\phi^{-1}$. In this case the ageing function $B$ is different  from $\Pi$. As pointed out in Section~\ref{section:richiami},   $B$ is not necessarily a copula, however it turns out to be the Archimedean  semi-copula  $S_\varphi$ in~(\ref{DefB-tramite-hatC-e-G-bar-ARCHIMEDEAN}) with generator $\varphi$ given in~(\ref{varphi}), i.e.,
$$\varphi(u)=\phi\big(\overline{G}(-\log(u))\big).$$
Actually $B=S_{\varphi}$ is  a copula if and only if $\varphi$ is convex. Even if this is not the case, one is still allowed to consider \textquotedblleft
extended" dependence properties of $B=S_\varphi$, namely bivariate ageing properties of the survival model.
 In this
perspective, the results obtained in~\cite{AveDort04}, \cite{MulScar05} can be
extended to Archimedean  semi-copulas and reformulated as equivalence relations between dependence properties of $B=S_{\varphi}$ and univariate ageing properties of the generator's inverse  $\varphi^{-1}$.

\bigskip

g) We remind that one purpose of our discussion is the analysis of the interrelations among stochastic dependence of $\overline{F}=\overline{F}^{(2)}$, and ageing properties of the marginal survival function $\overline{G}$.
In the Archimedean case of item f), stochastic dependence of $\overline{F}(x,y)=\overline{F}^{(2)}(x,y)=C_\phi(\overline{G}(x), \overline{G}(y))$
can be characterized by ageing property of survival function $\phi^{-1}$,  in view of the equivalences~(\ref{A-D-PQD})---(\ref{A-D-SI}) above. By the same token, bivariate ageing can be characterized in terms of the univariate ageing property of survival function~$\varphi^{-1}$. By imposing one fixed condition of ageing on each of the three survival functions, $\phi^{-1},\, \varphi^{-1}$, $\overline{G}$,  we respectively  obtain a property of dependence, of bivariate ageing, and of univariate ageing for the survival model $\overline{F}=\overline{F}^{(2)}$.
These survival functions are linked together by the relation~(\ref{varphi}), which we rewrite in the form
\begin{equation}
\overline{G}(x)=\phi^{-1}\big( \varphi(e^{-x})\big),
\end{equation}
whence, imposing  the fixed  property of ageing on two of these survival functions implies a condition for the third one. We give an example of this procedure in Lemma~\ref{lemma:G=H1-H2}.
\\
\indent By taking again into account the equivalence properties of \mbox{type~(\ref{A-D-PQD})---(\ref{A-D-SI}),} we convert such implications  (for  $\phi^{-1},\, \varphi^{-1}$, $\overline{G}$)
into implications (for  $C_\phi,\, S_\varphi$,  $\overline{G}$) of the form appearing in items 1., 2., and 3.\@ at the beginning of this section. Finally, observing that the univariate ageing properties of $\overline{G}$  can be characterized by the dependence properties of the semi-copula $S_{\overline{G}^{-1}}$, the previous implications can be rephrased as implications on the semi-copulas $C_\phi,\, S_\varphi$,  $S_{\overline{G}}$ (as already pointed out in Remark~\ref{rem:5.4} for the IFR/DFR property)

\bigskip

The announced conceptual path concludes with the following item.
\bigskip

h) From the arguments in the above items, and in particular item g), we see that, for the Archimedean case, it is equivalent to state results in terms of stochastic dependence of semi-copulas or in terms of univariate ageing of the inverse of their generators. However the description in term of stochastic dependence allows for a more general analysis. Actually, we can deal with stochastic dependence of semi-copulas, even for non-Archimedean semi-copulas.
\\
\indent We highlight that generally properties of dependence can be characterized in terms of appropriate comparisons with the independent case, i.e.,    between the two models
 $$
\overline{F}(x,y)= \widehat{C}\big(\overline{G}(x),\overline{G}(y) \big)\quad \text{and}  \quad   \overline{F}_\Pi(x,y)=\Pi(\overline{G}(x),\overline{G}(y)).
$$
This comparison becomes a comparison between $\widehat{C}$ and $\Pi$.
 For instance the PQD property means that
$$
\overline{F}(x,y) \geq \overline{F}_\Pi(x,y)=\overline{G}(x)\overline{G}(y),
$$
or equivalently
$$
\widehat{C}(u,v)\geq \Pi(u,v)=u\,v.
$$
Similarly, we consider properties of bivariate ageing which can be characterized in terms of appropriate comparisons with the Schur-constant case, i.e., between the two functions
$$
\overline{F}(x,y)
\quad  \text{and} \quad   \overline{G}(x+y).
$$
We observe that the above functions can be written as
$$
\overline{F}(x,y)=\overline{G}\big(-\log\big(B(e^{-x},e^{-y})\big)\big)
\quad  \text{and} \quad   \overline{G}(x+y)= \overline{G}\big(-\log\big(\Pi(e^{-x},e^{-y})\big)  \big),
$$
and therefore this comparison becomes now a comparison between $B$ and $\Pi$.
For instance the bivariate NBU  property is defined by
$$
\overline{F}(x,y) \leq  \overline{G}(x+y),
$$
which can also be written
$$
B(u,v) \geq \Pi(u,v)=u\,v.
$$

This approach  justifies the fact that   bivariate ageing properties of $\overline{F}(x,y)$ are defined in terms of dependence properties of $B$, as done in~\cite{BasSpiz05a}. More precisely we now give a partial answer to the above questions \textbf{Q1}--\textbf{Q3}, by the following claim:
\\
\emph{
Let $\preceq$ be a Positive   Dependence Ordering (see, e.g., \cite{KimSam1989}), and consider the Positive Dependence Property for the survival copula defined by the condition
\begin{equation}\label{C-POD-Pi}
\widehat{C}(u,v) \succeq \Pi(u,v).
\end{equation}
Then the  \textquotedblleft corresponding"   Positive univariate Ageing Property  for $\overline{G}$ and
Positive bivariate   Ageing  Property  for $\overline{F}(x,y)$ are
\begin{equation}\label{G-B-POD-Pi}
S_{G^{-1}}(u,v) \succeq \Pi(u,v)\quad \text{and}\quad  B(u,v) \succeq \Pi(u,v).
\end{equation}
}

\bigskip

\begin{remark}
When $B$ is a copula, then $\overline{F}_B(x,y)=B(e^{-x},e^{-y})$ is a true bivariate survival function, with standard exponential marginals, as well as $\Pi(e^{-x},e^{-y})$. Furthermore, since $B$ does  clearly coincide with the survival copula of $\overline{F}_B(x,y)$, and simultaneously is also the bivariate ageing of it, the dependence property of such a model coincides with the bivariate ageing property. The latter is also the ageing property of  $\overline{F}$, since $\overline{F}$ and $\overline{F}_B$  share the same family of level curves.
\end{remark}
\bigskip

The path described so far  unifies the treatment of the Archimedean and non-Archimedean models.  It remains to check that items 1., 2., and 3.\@ at the beginning of this section hold true for the above \textquotedblleft corresponding" ageing/dependence properties.
 Results of this type have   been obtained in~\cite{BasSpiz05a} concerning with some specific ageing/dependence properties. In that paper the case dealing with the PKD property was an exception, in the sense that only Archimedean models were considered. The concept of generalized Kendall distribution and  related equivalence classes of semi-copulas (see Definition~\ref{def:generalized-Kendall-class} below) allow us to treat the PKD property for a class of non-Archimedean models, as shown in the next Section~\ref{section:IFRA-PKD}.

\section{Relating Dependence to Ageing Properties: the IFRA/DFRA case}\label{section:IFRA-PKD}

In the Reliability literature (see~\cite{BirEsaMar1966}, \cite{BarPros1975}, \cite{LaiXie2006}, \cite{ShaSha2007}) relevant concepts of positive/negative one-dimensional ageing are those of Increasing/Decreasing Failure Rate in Average (IFRA/DFRA), which, respectively, generalize those of IFR/DFR and are defined as follows.
\begin{definition}\label{def:IFRA-DFRA}
A one dimensional survival function $\overline{G}$ is \emph{IFRA}  if  and only
if
$$
x \mapsto -\frac{\log \overline{G}(x)}{x}
$$
is an increasing function
(see e.g.\@   \cite{BarPros1975});
 $\overline{G}$ is \emph{DFRA} if and only
if it
 is a non-decreasing function.
\end{definition}

This notion of ageing is strictly related with the notion of PKD/NKD.
As shown in~\cite{AveDort04}, in fact, an Archimedean copula (with a differentiable generator $\phi$) is  NKD/PKD, i.e., $\phi(u)/\phi^\prime(u)\leq u\, \log(u)$ if and only if $\phi^{-1}$  is a IFRA/DFRA survival function, respectively (see~(\ref{A-D-PKD})).

Let us now come to an exchangeable model  with the Archimedean  survival copula $\widehat{C}=C_\phi$ (with a differentiable generator $\phi$), and marginal survival function $\overline{G}$. As we observed in  Remark~\ref{rem:B-Archimedean}, the corresponding
ageing function is $B=S_\varphi$, where we are using the notation given in (\ref{DefB-tramite-hatC-e-G-bar-ARCHIMEDEAN}) and~(\ref{varphi}), i.e., $\varphi(u)= \phi\big(\overline{G}(-\log(u))\big)$. Inspired by the latter circumstance, in~\cite{BasSpiz05a} the PKD/NKD property was extended to Archimedean ageing functions (not necessarily copulas) by requiring $\varphi(u)/\varphi^\prime(u)\leq u\, \log(u)$, and it was pointed out that the above equivalence between NKD/PKD and IFRA/DFRA  holds even for Archimedean ageing functions.

For such an exchangeable model,  a result similar to Proposition~\ref{cor:super/sub/migrative} was given in~\cite{BasSpiz05a} (see Example~7.4 therein), for what concerns relations between PKD/NKD properties for ageing functions and IFRA/DFRA. Such a result
can be rephrased here as follows.

\begin{proposition} \label{prop:PKD-BAS-SPIZ}
${}$\\\vspace{-3mm}
\begin{enumerate}
\item[(i)] If  $\overline{G}$ is \emph{IFRA} and $\widehat{C}=C_\phi$ is \emph{PKD}
then $B=S_\varphi$ is \emph{PKD}
\item[(ii)] If $\overline{G}$ is \emph{DFRA} and $\widehat{C}=C_\phi$ is \emph{NKD}   
then $B=S_\varphi$ is \emph{NKD}
\item[(iii)] If $B=S_\varphi$ is \emph{PKD}   and $\overline{G}$ is \emph{DFRA}    then $\widehat{C}=C_\phi$ is \emph{PKD}
\item[(iv)] If $B=S_\varphi$ is \emph{NKD} and $\overline{G}$ is \emph{IFRA} 
then $\widehat{C}=C_\phi$ is \emph{NKD}
\item[(v)]  If $\widehat{C}=C_\phi$ is \emph{PKD} and $B=S_\varphi$ is \emph{NKD}    then
$\overline{G}$ is \emph{DFRA}
\item[(vi)] If $\widehat{C}=C_\phi$ is \emph{NKD} and $B=S_\varphi$ is \emph{PKD}   then
$\overline{G}$ is \emph{IFRA}

\end{enumerate}

\end{proposition}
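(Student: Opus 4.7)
The plan is to reduce every one of the six implications to a statement purely about IFRA/DFRA properties of the three univariate survival functions \,$\phi^{-1}$, $\varphi^{-1}$, $\overline{G}$, and then to settle all six at once by a short composition argument. The translation is immediate from results already recalled in the paper: by~\eqref{A-D-PKD}, the copula $C_\phi$ is PKD/NKD iff $\phi^{-1}$ is DFRA/IFRA; by the extension of the same equivalence to Archimedean semi-copulas (discussed just before the statement and in Remark~\ref{rem:5.4}), the ageing function $B=S_\varphi$ is PKD/NKD iff $\varphi^{-1}$ is DFRA/IFRA; and the IFRA/DFRA assumption on $\overline{G}$ is in the correct form already. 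Hence the six items become implications on IFRA/DFRA of the three survival functions \,$\phi^{-1}$, $\varphi^{-1}$, $\overline{G}$.

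First I would record the key identity. From~\eqref{varphi} one has $\varphi(u)=\phi(\overline{G}(-\log u))$, which rewrites as $\overline{G}(x)=\phi^{-1}(\varphi(e^{-x}))$. Setting
\[
\psi_1(t):=-\log\phi^{-1}(t),\qquad \psi_2(t):=-\log\varphi^{-1}(t),\qquad \psi_G(x):=-\log\overline{G}(x),
\]
these are strictly increasing with value $0$ at $0$, and the identity above becomes
\[
\psi_G \;=\; \psi_1\circ\psi_2^{-1},
\]
equivalently $\psi_1=\psi_G\circ\psi_2$ and $\psi_2=\psi_G^{-1}\circ\psi_1$. This is the link that propagates ageing properties among the three survival functions.

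Next I would use the classical reformulation: a survival function $\overline{H}$ is IFRA iff $\psi_H(x)/x$ is nondecreasing, i.e., $\psi_H$ is \emph{star-shaped} ($\psi_H(\alpha x)\le\alpha\psi_H(x)$ for $\alpha\in(0,1)$); it is DFRA iff $\psi_H$ is \emph{anti-star-shaped} ($\psi_H(\alpha x)\ge\alpha\psi_H(x)$). The two trivial facts I would rely on are: (a) star-shaped composed with star-shaped is star-shaped (and likewise for anti-star-shaped), because if $f(\alpha y)\le\alpha f(y)$ and $g(\alpha x)\le\alpha g(x)$ then $f(g(\alpha x))\le f(\alpha g(x))\le\alpha f(g(x))$ by monotonicity of $f$; and (b) $f$ is star-shaped iff $f^{-1}$ is anti-star-shaped, as seen by substituting $s=f(t)$ into $f(t)/t$.

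With these in hand, each of the six items is a two-line check. For item $(i)$, $C_\phi$ PKD gives $\psi_1$ anti-star-shaped, so $\psi_1^{-1}$ is star-shaped; together with $\psi_G$ star-shaped (from $\overline{G}$ IFRA), the composition $\psi_2^{-1}=\psi_1^{-1}\circ\psi_G$ is star-shaped, so $\psi_2$ is anti-star-shaped, i.e.\ $\varphi^{-1}$ is DFRA, i.e.\ $B=S_\varphi$ is PKD. Item $(iii)$ goes through $\psi_1=\psi_G\circ\psi_2$, composing two anti-star-shaped functions to get an anti-star-shaped one. Item $(v)$ uses $\psi_G=\psi_1\circ\psi_2^{-1}$ with $\psi_1$ anti-star-shaped and $\psi_2^{-1}$ anti-star-shaped, again producing an anti-star-shaped $\psi_G$. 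Items $(ii)$, $(iv)$, $(vi)$ are obtained from $(i)$, $(iii)$, $(v)$ by reversing every inequality (i.e.\ swapping star-shaped with anti-star-shaped throughout).

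The only real obstacle is bookkeeping: keeping straight which of the three identities among $\psi_G,\psi_1,\psi_2$ to invoke for each item so that the \emph{two} hypothesised properties on two of the functions can be composed to give the third, and ensuring that the monotonicity used in inequalities such as $f(g(\alpha x))\le f(\alpha g(x))$ always runs in the right direction. Once the three-function identity and the star-shaped composition/inversion lemmas are stated explicitly, the six implications are parallel and each is a direct one-line verification.
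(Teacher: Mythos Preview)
Your proposal is correct and essentially follows the paper's own self-contained route: the paper does not prove this proposition directly (it is quoted from~\cite{BasSpiz05a}), but later supplies Lemma~\ref{lemma:G=H1-H2} precisely as an alternative proof, and that lemma's argument is the same reduction you carry out, with $\overline{H}_1=\phi^{-1}$, $\overline{H}_2=\varphi^{-1}$, and $R_{\overline{H}}=\psi_H$ in your notation. The only cosmetic difference is that the paper writes the key step as the product factorization $R_{\overline{H}_1}(x)/x = \big[R_{\overline{G}}(R_{\overline{H}_2}(x))/R_{\overline{H}_2}(x)\big]\cdot\big[R_{\overline{H}_2}(x)/x\big]$, whereas you phrase it via the star-shaped composition and inversion lemmas; these are equivalent formulations of the same monotonicity argument.
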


\begin{remark}\label{AveDort04-BasSpiz05a}

We remind that  we defined  the generalized Kendall distribution $K_S$ (see Definition~\ref{Def:Kendall-semicopula})
 for any semi-copula $S$ (neither necessarily Archimedean nor ageing function)
 and, consequently, implicitly extended the \emph{PKD} property to semi-copulas, by requiring
 \begin{align}\label{PKD-semicopula}
K_S(t)\geq K_\Pi(t)=t-t\,\log(t),
\end{align}
and similarly for the \emph{NKD} property.
\\
In view of Example~\ref{example:KS_phi}, we can claim that Proposition~\ref{prop:PKD-BAS-SPIZ} also holds with our extended definition of \emph{PKD/NKD}.
 \end{remark}

Here we  show that the previous result admits a natural extension to a larger class of exchangeable models, where the survival copula $\widehat{C}$ is not necessarily Archimedean. On this purpose we start by
extending to semi-copulas the equivalence relation based on the Kendall distribution, introduced in~\cite{NelAl09} for  copulas.
\begin{definition}\label{def:generalized-Kendall-class}
Two bivariate semi-copulas $S_1(u,v)$ and $S_2(u,v)$ are \emph{Kendall-equivalent}, written as $S_1\equiv_K S_2$, if and only if the Kendall  distributions $K_{S_1}(t)$ and $K_{S_2}(t)$ do coincide.
\\  For a fixed semi-copula $S(u,v)$ we denote by $\mathcal{C}_S$ the equivalence class containing~$S$.
\end{definition}

We can thus claim that if a semi-copula $S$ is PKD/NKD then, by definition, all the semi-copulas in the equivalence class  $\mathcal{C}_S$
share  the same Kendall dependence property.
The latter claim suggests us that the appropriate extension of Proposition~\ref{prop:PKD-BAS-SPIZ} is obtained by replacing the conditions $\widehat{C}=C_\phi$ and $B=S_\varphi$ with the conditions $\widehat{C} \in \mathcal{C}_{C_\phi}$ and $B\in \mathcal{C}_{S_\varphi}$, respectively. A precise result will be given in Proposition~\ref{prop:PKD-NAP-SPIZ} below. To this end we also introduce the following definition.

\begin{definition}\label{def:pseudo-Archimedean}
A semi-copula $S$ is \emph{pseudo-Archimedean} whenever there exists an Archimedean semi-copula $S_\varphi$ such that   $S\in \mathcal{C}_{S_\varphi}$. The generator $\varphi$ of $S_\varphi$ will be referred to as the \emph{pseudo-generator of}~$S$.
\end{definition}

In the perspective of extending  Proposition~\ref{prop:PKD-BAS-SPIZ}, we  recall (Proposition~\ref{prop:K-di-C-hat---K-di-B}) how to compute explicitly  the Kendall distribution for a class of copulas larger than the Archimedean one, as considered in~\cite{GenRiv01}.
In this respect we remind that, following what was done for copulas in~\cite{GenRiv01},   attention in~\cite{NapSpiz09} was restricted to models with  the following properties:
\begin{description}
\item{(P1)} for any $u\in (0,1)$ the function $v \mapsto \widehat{C}_u(v):=\widehat{C}(u,v)$ is strictly increasing and continuous (and therefore invertible),
    \item{(P2)} the function $x \mapsto \overline{G}(x)$ is strictly decreasing and continuous (and therefore invertible).
\end{description}

Furthermore (still in Proposition~\ref{prop:K-di-C-hat---K-di-B}) we will analyze the relation between the Kendall distribution of a copula and the generalized one of the corresponding ageing function
$B(u,v)$: To this end we recall that $B(u,v)$ can be written in terms of $ \widehat{C}(u,v)$, $\gamma (u)= \exp\{-\overline{G}^{-1}(u) \}$, and $\gamma^{-1} (z)= \overline{G}(-\log(z))$ as in~(\ref{def:B-con-C-hat-e-gamma}), i.e.,
$$
B(u,v)=\gamma\big(\widehat{C}(\gamma^{-1}(u),\gamma^{-1}(v) \big).
$$

\begin{remark}
When $\overline{G}(x)$ is strictly positive all over $[0,\infty)$, as we have assumed in this paper, then the functions $\gamma(u)$ and $\gamma^{-1}(z)$ are strictly increasing and continuous with $\gamma{(0)}=0$ and $\gamma{(1)}=1$.

It is thus clear that the conditions (P1) and (P2) imply that for any $u\in (0,1)$ the function $v \mapsto B_u(v):=B(u,v)$ is strictly increasing and continuous (and therefore invertible).

In conclusion either the generalized inverse $C^{-1}_u(t)$ or  $B^{-1}_u(t)$  are true inverse functions.
\end{remark}

We are thus in a position to compute the Kendall distributions of $\widehat{C}$ and~$B$, respectively, by using Proposition~1 in~\cite{GenRiv01} for copulas, and its extension to ageing functions.

\begin{proposition}\label{prop:K-di-C-hat---K-di-B}
Assume properties (P1) and (P2). Then, for any $t\in[0,1]$,
\begin{align}
K_{\widehat{C}}(t)&=t+\int_{t}^{1}\frac{\partial\widehat{C}}{\partial u}(u,v)|_{v=\widehat{C}_{u}^{-1}(t)}\,du,\phantom{\frac{.}{ \big|}} \label{mathcal-K-di-C-hat}%
\intertext{Assume furthermore that the density $g$ of $G$ is continuous, then}
K_{B}(t)&=t+ \gamma^\prime (\gamma^{-1}(t))\, \big[K_{\widehat{C}}\big(\gamma^{-1}(t)\big) - \gamma^{-1}(t) \big]\phantom{\frac{.}{ \big|}}\label{KB-in-terms-KC-e-gamma}
\\
&=t+ \frac{1}{\frac{d}{dt}  \overline{G}(-\log(t))} \,\Big[ K_{\widehat{C}}\big(\overline{G}(-\log(t))\big) - \overline{G}(-\log(t))\Big], \phantom{\frac{.}{ \bigg|}}\label{KB-in-terms-KC-e-bar-G}
\\
K_{B}(t)&=t+\int_{t}^{1}\frac{\partial B}{\partial u}(u,v)|_{v=B_{u}^{-1}(t)}\,du. \label{mathcal-K-di-B}
\end{align}

\end{proposition}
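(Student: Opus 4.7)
The plan is to establish the three formulas in order, using the Genest--Rivest approach to rewrite the partition supremum in Definition~\ref{Def:Kendall-semicopula} as a Riemann integral, and then to transport the identity from $\widehat{C}$ to $B$ via the change of variable induced by $\gamma$.

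First I would prove (\ref{mathcal-K-di-C-hat}). Under hypothesis (P1) the generalized inverse $\widehat{C}_u^{-1}(t)$ is the ordinary inverse, and for any partition $0=u_0<u_1<\cdots<u_n=1$ refined so as to contain the point $t$, the representation~(\ref{eq:KS-sup-t+}) gives
\[
K_{\widehat{C}}(t) \;=\; t + \sup_{\mathcal{P}}\sum_{u_{i+1}>t}\bigl[\widehat{C}(u_{i+1},v_i)-\widehat{C}(u_i,v_i)\bigr],\qquad v_i=\widehat{C}_{u_{i+1}}^{-1}(t).
\]
Because $\widehat{C}$ has partial derivatives a.e.\ and is Lipschitz in $u$ (as a copula), the Mean Value Theorem converts each summand into $\tfrac{\partial\widehat{C}}{\partial u}(\xi_i,v_i)(u_{i+1}-u_i)$ and, as the mesh tends to zero, the sum converges to the integral $\int_t^1\tfrac{\partial\widehat{C}}{\partial u}(u,\widehat{C}_u^{-1}(t))\,du$. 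Strictly speaking, this is the classical computation of \cite{GenRiv01}, and I would essentially cite that result once I have rewritten the partition supremum in its form.

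Next, I would prove (\ref{mathcal-K-di-B}) by exactly the same argument: assumption (P1)+(P2) plus continuity of the density $g$ of $G$ guarantees that $B(u,v)=\gamma(\widehat{C}(\gamma^{-1}(u),\gamma^{-1}(v)))$ has all the regularity needed ($v\mapsto B(u,v)$ is a strictly increasing continuous bijection for fixed $u$, and $\partial B/\partial u$ exists almost everywhere and is locally bounded on $(0,1)^2$). Hence the Riemann-sum $\to$ integral passage in Definition~\ref{Def:Kendall-semicopula} goes through verbatim, producing (\ref{mathcal-K-di-B}).

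Finally, I would derive (\ref{KB-in-terms-KC-e-gamma}) from (\ref{mathcal-K-di-B}) by substitution. The chain rule applied to $B=\gamma\circ\widehat{C}\circ(\gamma^{-1},\gamma^{-1})$ gives
\[
\frac{\partial B}{\partial u}(u,v) \;=\; \gamma'\bigl(\widehat{C}(\gamma^{-1}(u),\gamma^{-1}(v))\bigr)\,\frac{\partial\widehat{C}}{\partial u'}\bigl(\gamma^{-1}(u),\gamma^{-1}(v)\bigr)\,(\gamma^{-1})'(u),
\]
and the key observation is that, when $v=B_u^{-1}(t)$, one has $\widehat{C}(\gamma^{-1}(u),\gamma^{-1}(v))=\gamma^{-1}(t)$, so the factor $\gamma'(\gamma^{-1}(t))$ comes out of the integral. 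Moreover $\gamma^{-1}(B_u^{-1}(t))=\widehat{C}^{-1}_{\gamma^{-1}(u)}(\gamma^{-1}(t))$. Performing the change of variable $u'=\gamma^{-1}(u)$ in the resulting integral transforms $\int_t^1\cdots du$ into $\int_{\gamma^{-1}(t)}^{1}\tfrac{\partial\widehat{C}}{\partial u'}(u',\widehat{C}^{-1}_{u'}(\gamma^{-1}(t)))\,du'$, which by (\ref{mathcal-K-di-C-hat}) equals $K_{\widehat{C}}(\gamma^{-1}(t))-\gamma^{-1}(t)$. This produces (\ref{KB-in-terms-KC-e-gamma}); the equivalent form (\ref{KB-in-terms-KC-e-bar-G}) follows from (\ref{def:gamma})--(\ref{def:gamma-a-1}) together with $(\gamma^{-1})'(t)=1/\gamma'(\gamma^{-1}(t))$.

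The main technical obstacle, in my view, is not the chain-rule substitution (which is mechanical once the regularity has been established) but the first step: justifying that for a \emph{semi-copula} (here $B$, which need not be $2$-increasing), the partition supremum in Definition~\ref{Def:Kendall-semicopula} coincides with the Riemann integral $\int_t^1\tfrac{\partial B}{\partial u}(u,B_u^{-1}(t))\,du$. Under (P1)+(P2) plus continuity of $g$ this can be done, but one has to be careful that the increments $B(u_{i+1},v_i)-B(u_i,v_i)$ are nonnegative (which follows from monotonicity in $u$) and that $\tfrac{\partial B}{\partial u}$ exists and is integrable a.e.\ along the level curve $\{B(u,v)=t\}$.
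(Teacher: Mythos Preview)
Your plan reverses the order of the paper's argument, and that reversal runs straight into the obstacle you yourself flag at the end. You propose to establish (\ref{mathcal-K-di-B}) first by repeating the Genest--Rivest computation ``verbatim'' for $B$, and then to deduce (\ref{KB-in-terms-KC-e-gamma}) by the chain rule. The paper does the opposite: it cites \cite{GenRiv01} for (\ref{mathcal-K-di-C-hat}), proves (\ref{KB-in-terms-KC-e-gamma}) \emph{directly from the partition definition} of $K_B$, and only then obtains (\ref{mathcal-K-di-B}) by combining (\ref{KB-in-terms-KC-e-gamma}) with (\ref{mathcal-K-di-C-hat}) and the change of variable $u'=\gamma(u)$.

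The reason the paper's order matters is exactly the difficulty you identify: for a genuine semi-copula $B$, the supremum in Definition~\ref{Def:Kendall-semicopula} need not coincide with the Riemann integral, because the sums are not monotone under refinement (the argument that refining increases the sum uses 2-increasingness, which $B$ may lack). So ``the Riemann-sum $\to$ integral passage goes through verbatim'' is not justified, and your closing paragraph does not say how to close this gap. The paper sidesteps the issue entirely: working at the level of partition sums, it substitutes $w_i=\gamma^{-1}(u_i)$, $z_i=\gamma^{-1}(v_i)$, writes each increment $\gamma(\widehat{C}(w_{i+1},z_i))-\gamma(\widehat{C}(w_i,z_i))$ via the Mean Value Theorem for $\gamma$, and controls the error using two facts that \emph{are} available --- the Lipschitz property of the copula $\widehat{C}$ (so $\Delta_i\le w_{i+1}-w_i$) and the continuity of $\gamma'$ at the single point $\gamma^{-1}(t)$ (this is where continuity of $g$ enters). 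This yields $K_B(t)-t=\gamma'(\gamma^{-1}(t))[K_{\widehat{C}}(\gamma^{-1}(t))-\gamma^{-1}(t)]$ without ever needing an integral formula for $B$; the integral formula (\ref{mathcal-K-di-B}) then drops out as a consequence. Your chain-rule derivation of (\ref{KB-in-terms-KC-e-gamma}) from (\ref{mathcal-K-di-B}) is correct, but it presupposes the step you cannot yet justify.
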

\begin{proof} We essentially need to prove~(\ref{KB-in-terms-KC-e-gamma}) for $t\in (0,1)$. In fact
\begin{description}
\item[-]
Equation~(\ref{mathcal-K-di-C-hat}) is exactly Proposition~1 in~\cite{GenRiv01},
\item[-]  Equation~(\ref{KB-in-terms-KC-e-bar-G}) just amounts to a reformulation of~(\ref{KB-in-terms-KC-e-gamma}), by rewriting $\gamma^{-1}(t)$ and $\gamma^\prime (\gamma^{-1}(t))$ in terms of $\overline{G}$.
\item[-] Equation~(\ref{mathcal-K-di-B})  easily follows by~(\ref{mathcal-K-di-C-hat}) and~(\ref{KB-in-terms-KC-e-gamma}) by taking into account that
\begin{align}
K_{B}(t)&=t+ \gamma^\prime (\gamma^{-1}(t))\,\int_{\gamma^{-1}(t)}^1
\frac{\partial\widehat{C}}{\partial u}(u,v)|_{v=\widehat{C}_{u}^{-1}(\gamma^{-1}(t))}\,du,\label{KB-in-terms-KC}
\end{align}
and by the change of variable $u^\prime=\gamma(u)$ within the previous integral.\\
 In this respect we point out that
 $$
 \text{$v=\widehat{C}_{u}^{-1}(\gamma^{-1}(t))$ iff $\gamma(v)=B_{\gamma(u)}^{-1}(t)$, }
 $$
  $$
  \text{$\widehat{C}(u,v)|_{v=\widehat{C}_{u}^{-1}(\gamma^{-1}(t))}=t$,\quad  $1=\gamma^{-1}(1)$.}
  $$
 \item[-]Equation~(\ref{KB-in-terms-KC-e-gamma}) is obvious for $t=0$ and $t=1$.
\end{description}
In view of~(\ref{eq:KS-sup-t+}), the proof of~(\ref{KB-in-terms-KC-e-gamma}) for $t\in (0,1)$ is obtained by proving that
$$
K_B(t)-t=\sup_{\mathcal{P}} \Big\{\sum_{ \underset{u_{i+1}> t}{i\in I}} [B(u_{i+1},v_i)-B(u_i,v_i)], \; \text{with }\; v_i=B^{-1}_{u_{i+1}}(t)\Big\}
$$
coincides with $\gamma^\prime (\gamma^{-1}(t))\, \big[K_{\widehat{C}}\big(\gamma^{-1}(t)\big) - \gamma^{-1}(t) \big]$.
In fact, setting
$$
w_i=\gamma^{-1}(u_i), \qquad z_i= \gamma^{-1}(v_i)
$$
and observing that
$$
z_i= \gamma^{-1}\big(B^{-1}_{u_{i+1}}(t)\big)
=\widehat{C}^{-1}_{w_{i+1}}(\gamma^{-1}(t)),
$$
we can write $K_B(t)-t$ as
\begin{align*}
&\sup\{\sum_{ \underset{w_{i+1}> \gamma^{-1}(t)}{i\in I}}  [\gamma \big(\widehat{C}(w_{i+1},z_i)\big)-\gamma\big(\widehat{C}(w_{i},z_i)\big)],\; z_i=\widehat{C}^{-1}_{w_{i+1}}(\gamma^{-1}(t))\}.
\intertext{
Setting $\Delta_i = \widehat{C}(w_{i+1},z_i)\big)-\widehat{C}(w_{i},z_i)$ we immediately get}
&\gamma \big(\widehat{C}(w_{i+1},z_i)\big)-\gamma\big(\widehat{C}(w_{i},z_i)\big)= \gamma^\prime\big(\widehat{C}(w_{i+1},z_i)\big) \, \Delta_i
+ Err_i,
\intertext{with}
 Err_i&=
\big[\gamma^\prime\big(\gamma^{-1}(t)+ \theta_i \, \Delta_i \big)-\gamma^\prime\big(\gamma^{-1}(t)\big)\big]\, \Delta_i, \quad \text{for a suitable $\theta_i\in (0,1)$.}
\end{align*}
As it is well known, any copula is a Lipschitz function, with Lipschitz constant equal to $1$, therefore $\widehat{C}(u,v)$  being  a copula, we have
$$
0\leq \Delta_i =  \widehat{C}(w_{i+1},z_i)\big)-\widehat{C}(w_{i},z_i)\leq w_{i+1}-w_{i}\leq \delta,
$$
where we have set
$$
\delta:= \max (w_{i+1}-w_{i}; \; t\leq w_i \leq w_{i+1} \leq 1).
$$
Without loss of generality we may assume that
  $t+\delta  < 1 $,  so that
$$
Err_i\leq \sup_{s \in [t, t+\delta]} |\gamma^\prime\big(\gamma^{-1}(s)\big)-\gamma^\prime\big(\gamma^{-1}(t)\big)|\, (w_{i+1}-w_i).
$$
Finally, observing that $\widehat{C}(w_{i},z_i)=\gamma^{-1}(t)$, we have that
\begin{align*}
&|\sum_{ \underset{w_{i+1}> \gamma^{-1}(t)}{i\in I}}  [\gamma \big(\widehat{C}(w_{i+1},z_i)\big)-\gamma\big(\widehat{C}(w_{i},z_i)\big)]-
\sum_{ \underset{w_{i+1}> \gamma^{-1}(t)}{i\in I}} \gamma^\prime\big(\gamma^{-1}(t)\big) \, \Delta_i |
\\
\leq &\sum_{ \underset{w_{i+1}> \gamma^{-1}(t)}{i\in I}}\sup_{s \in [t, t+\delta]} |\gamma^\prime\big(\gamma^{-1}(s)\big)-\gamma^\prime\big(\gamma^{-1}(t)\big)|\, (w_{i+1}-w_i)
\\&= \sup_{s \in [t, t+\delta]} |\gamma^\prime\big(\gamma^{-1}(s)\big)-\gamma^\prime\big(\gamma^{-1}(t)\big)|
\end{align*}
Recalling that  $g(x)$ denotes the continuous density of $G$, we rewrite   $\gamma^\prime\big(\gamma^{-1}(t)\big)= \frac{t}{g(-\log(t))}$. By assumption the latter function is  continuous  in $(0,1)$,
and the proof of~(\ref{KB-in-terms-KC-e-gamma}) is accomplished by observing that
$$
\sup\{\sum_{ \underset{w_{i+1}> \gamma^{-1}(t)}{i\in I}}  \Delta_i, \; \text{with}\;z_i=C^{-1}_{w_{i+1}}(\gamma^{-1}(t))\}=
K_{\widehat{C}}\big(\gamma^{-1}(t)\big) - \gamma^{-1}(t) .
$$
\end{proof}

\begin{remark}
Apparently,
the equation~(\ref{KB-in-terms-KC-e-gamma})  is nothing else but an equivalent form to write the thesis of Proposition~20 in~\cite{NapSpiz09}. Actually the difference between the two results  lies  in the present generalization, to semi-copulas, of Kendall distributions.
In~\cite{NapSpiz09}   such a generalization was not considered, rather   the right hand side of~(\ref{mathcal-K-di-B}) was taken as an operator on ageing functions.  Here, on the contrary, $K_B(t)$ is defined directly by~(\ref{eq:KS-sup}), or, equivalently by~(\ref{eq:KS-sup-t+}), and equation~(\ref{mathcal-K-di-B}) is obtained as a direct consequence.
\end{remark}

The present approach allows us to highlight that $B$ is pseudo-Archimedean if and only if $\widehat{C}$ is such, and to obtain the pseudo-generators of them. Finally,  as a crucial issue, we can thus obtain the relation tying $\overline{G}$ with such pseudo-generators. The precise statements follow:
\begin{proposition}\label{prop:G=phi-varphi-1}
Assume conditions (P1) and (P2).
Fix a $t_0\in (0,1)$ and define
\begin{equation}\label{eq:phi-t-con-K-C-hat}
\phi(t):=\exp\left\{ \int_{t_0}^t \frac{1}{s-K_{\widehat{C}}(s)}\, ds\right\},
\end{equation}
and
\begin{equation}\label{eq:varphi-t-con-K-C-hat}
\varphi(t):=\exp\left\{ \int_{\gamma(t_0)}^t \frac{1}{s-K_{B}(s)}\, ds\right\}.
\end{equation}
Then
\\
(i) $\phi$ is the generator of an Archimedean copula, namely $C_\phi$, and $\varphi$ is the generator of an Archimedean semi-copula, $S_\varphi$,
\\(ii) $\widehat{C}$ and $B$ are pseudo-Archimedean and, in particular,
 $\widehat{C} \in \mathcal{C}_{C_\phi}$
 and $B\in \mathcal{C}_{S_\varphi}$.
 \\
Assume furthermore that the density $g$ of $G$ is continuous, then
\vspace{2mm}\\
(iii) \qquad \qquad \qquad \quad   \quad   $\varphi(t)= \phi(\gamma^{-1}(t))= \phi\big(\overline{G}(-\log(t)) \big)$,
 \\
or, equivalently,
$$\overline{G}(x)= \phi^{-1}( \varphi (e^{-x})).$$
\end{proposition}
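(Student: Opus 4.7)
The plan is to exploit Example~\ref{example:KS_phi}, which says that an Archimedean semi-copula $S_\psi$ with a differentiable generator $\psi$ has $K_{S_\psi}(t)=t-\psi(t)/\psi'(t)$. Reading this identity as a first-order ODE for $\log\psi$, namely $(\log\psi)'(t)=1/(t-K_{S_\psi}(t))$, and integrating from $t_0$ yields exactly the defining formulas~(\ref{eq:phi-t-con-K-C-hat}) and~(\ref{eq:varphi-t-con-K-C-hat}). Thus the whole proposition amounts to inverting the Kendall-distribution map for Archimedean semi-copulas, taking $K_{\widehat{C}}$ and $K_B$ as the input data.

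For items~(i) and~(ii) I would first check that $\phi$ and $\varphi$ are bona fide generators. Since $K_{\widehat{C}}(s)\geq s$ on $(0,1)$ by~(\ref{eq:Ks-geq-t}), the integrand in~(\ref{eq:phi-t-con-K-C-hat}) is non-positive, so $\phi$ is continuous and strictly decreasing on $(0,1]$; the boundary behavior $\phi(1)=0$ and $\phi(0^+)=+\infty$ comes from the divergence of the defining integral at the two endpoints, which follows from the rate at which $K_{\widehat{C}}(s)-s$ vanishes there. Once $\phi$ is recognized as a generator, differentiating its definition yields $\phi'(t)/\phi(t)=1/(t-K_{\widehat{C}}(t))$, whence by Example~\ref{example:KS_phi}
\begin{equation*}
K_{C_\phi}(t)=t-\phi(t)/\phi'(t)=K_{\widehat{C}}(t),
\end{equation*}
so $\widehat{C}\in\mathcal{C}_{C_\phi}$. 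The argument for $\varphi$ and $B\in\mathcal{C}_{S_\varphi}$ is formally identical. The convexity of $\phi$, needed to upgrade $C_\phi$ from a semi-copula to a genuine Archimedean copula, follows from the fact that $K_{\widehat{C}}$ is the Kendall distribution of an actual copula; no such upgrade is claimed for $\varphi$, since $B$ need not be a copula.

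For item~(iii) I would rewrite the relation~(\ref{KB-in-terms-KC-e-gamma}) in the form
\begin{equation*}
t-K_B(t)=\gamma'(\gamma^{-1}(t))\bigl[\gamma^{-1}(t)-K_{\widehat{C}}(\gamma^{-1}(t))\bigr],
\end{equation*}
substitute it into the integral defining $\varphi(t)$, and perform the change of variable $u=\gamma^{-1}(s)$, under which the factor $\gamma'(u)$ cancels and the lower limit $\gamma(t_0)$ is mapped to $t_0$. The outcome is $\log\varphi(t)=\log\phi(\gamma^{-1}(t))$, i.e.\ $\varphi(t)=\phi(\gamma^{-1}(t))=\phi(\overline{G}(-\log t))$, and the equivalent form $\overline{G}(x)=\phi^{-1}(\varphi(e^{-x}))$ follows by setting $t=e^{-x}$ and inverting.

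The main obstacle is the convexity claim hidden inside~(i). The ODE construction produces $\phi$ as a strictly decreasing continuous function with the correct boundary behavior entirely automatically, but pinning down convexity --- and hence the assertion that $C_\phi$ is an Archimedean \emph{copula}, not merely a semi-copula --- requires exploiting that $K_{\widehat{C}}$ comes from a genuine bivariate probability measure rather than from an abstract semi-copula. A secondary delicacy is verifying the endpoint divergences of the defining integrals, which needs some quantitative control on $K_{\widehat{C}}(s)-s$ near $s=0$ and $s=1$.
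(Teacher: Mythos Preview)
Your proposal is correct and follows essentially the same approach as the paper: both invert the Kendall map via the ODE $(\log\psi)'=1/(t-K)$ to recover the Archimedean (semi-)generator, and both obtain~(iii) from relation~(\ref{KB-in-terms-KC-e-gamma}) by the change of variable $u=\gamma^{-1}(s)$. The only difference is that the paper handles what you flag as ``the main obstacle'' --- convexity of $\phi$ and the endpoint behavior --- by invoking the characterization in Genest--Rivest~(1993) and Nelsen et al.~(2009): the required condition is $K_{\widehat{C}}(t^-)>t$ on $(0,1)$, which follows under (P1)--(P2) from the integral representation~(\ref{mathcal-K-di-C-hat}).
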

\begin{proof}
By Proposition 1.2 in~\cite{GenRiv1993}, and as observed in~\cite{NelAl09}, we know that, for any ''true'' Kendall distribution function $K(t)$ of a bivariate survival function, the function
$\phi_K(t):=\exp\left\{ \int_{t_0}^t \frac{1}{s-K(s)}\, ds\right\}$ is the generator of an Archimedean copula, i.e., $\phi_K(t)$ is increasing and convex, with  $\phi_K(0)=1$, if and only if $K(t^-) > t $ for all $t\in (0,1)$. Moreover $K(t)$ is the Kendall distribution of the  Archimedean copula $C_{\phi_K}$. \\
Under (P1) and (P2), by~(\ref{mathcal-K-di-C-hat}) in the previous Proposition~\ref{prop:K-di-C-hat---K-di-B}, the Kendall distribution function of $\widehat{C}$ satisfies $K_{\widehat{C}}(t^-) >t$ for all $t\in (0,1)$, so that $\phi(t)$ is the generator of $C_\phi$, and clearly
$$
K_{\widehat{C}}(t)= K_{C_\phi}(t)=  t-\frac{\phi(t)}{\phi^\prime(t)},
$$
and \emph{(i)-(ii)} follow as far as $\phi(t)$ and $\widehat{C}$ are concerned.
\\
 Similar results hold
 except for the convexity property (see also~\cite{NapSpiz09}), as far as $\varphi(t)$ and $B$ are  concerned, in particular
 $$
 K_B(t)= K_{S_\varphi}(t)=t-\frac{\varphi(t)}{\varphi^\prime(t)}.
 $$
\indent Finally, by using the above expressions of $ K_{\widehat{C}}(t)$ and $K_B(t)$, together with~(\ref{KB-in-terms-KC-e-gamma}), one immediately gets that $\varphi(t)= \phi(\gamma^{-1}(t))$, i.e., \emph{(iii)} follows.
\end{proof}

The announced generalization of~Proposition~\ref{prop:PKD-BAS-SPIZ} is now proved by collecting the results in Propositions~\ref{prop:G=phi-varphi-1}, Remark~\ref{AveDort04-BasSpiz05a}, and Proposition~\ref{prop:PKD-BAS-SPIZ} itself.

\begin{proposition}\label{prop:PKD-NAP-SPIZ}
Let $F(x,y)$ be a bivariate exchangeable model satisfying the conditions (P1) and (P2). Besides the standing assumptions on $\overline{G}$, assume that the corresponding density $g(x)$ is continuous. Finally assume that the generator~$\phi(t)$ defined in~(\ref{eq:phi-t-con-K-C-hat}) is differentiable. Then
\begin{enumerate}
\item[(i)] If  $\overline{G}$ is \emph{IFRA} and $\widehat{C}$ is \emph{PKD}
then $B$ is \emph{PKD}
\item[(ii)] If $\overline{G}$ is \emph{DFRA} and $\widehat{C}$ is \emph{NKD}   
then $B$ is \emph{NKD}
\item[(iii)] If $B$ is \emph{PKD}   and $\overline{G}$ is \emph{DFRA}    then $\widehat{C}$ is \emph{PKD}
\item[(iv)] If $B$ is \emph{NKD} and $\overline{G}$ is \emph{IFRA} 
then $\widehat{C}$ is \emph{NKD}
\item[(v)]  If $\widehat{C}$ is \emph{PKD} and $B$ is \emph{NKD}    then
$\overline{G}$ is \emph{DFRA}
\item[(vi)] If $\widehat{C}$ is \emph{NKD} and $B$ is \emph{PKD}   then
$\overline{G}$ is \emph{IFRA}
\end{enumerate}
\end{proposition}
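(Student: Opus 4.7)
My plan would be to reduce the statement to Proposition~\ref{prop:PKD-BAS-SPIZ} by exploiting the pseudo-Archimedean structure produced by Proposition~\ref{prop:G=phi-varphi-1}. First, I would apply that proposition: the hypotheses (P1), (P2), continuity of $g$, and differentiability of the generator $\phi$ defined in~(\ref{eq:phi-t-con-K-C-hat}) are precisely what is needed to produce an Archimedean copula $C_\phi$ and an Archimedean semi-copula $S_\varphi$ with $\widehat{C}\in\mathcal{C}_{C_\phi}$, $B\in\mathcal{C}_{S_\varphi}$, and the key identity $\overline{G}(x)=\phi^{-1}(\varphi(e^{-x}))$. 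This identity is exactly the relation~(\ref{varphi}) that ties the three survival functions $\phi^{-1}$, $\varphi^{-1}$, $\overline{G}$ together in an Archimedean model, and this is what will allow us to invoke an Archimedean-only result on a merely pseudo-Archimedean one.

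Second, I would note that by Definition~\ref{def:PKD} together with its extension~(\ref{PKD-semicopula}) to semi-copulas via the generalized Kendall distribution, the PKD/NKD property is intrinsic to the Kendall distribution, hence constant on each equivalence class $\mathcal{C}_S$ of Definition~\ref{def:generalized-Kendall-class}. Consequently $\widehat{C}$ is PKD (resp. NKD) iff $C_\phi$ is PKD (resp. NKD), and $B$ is PKD (resp. NKD) iff $S_\varphi$ is PKD (resp. NKD). Thus the whole six-fold statement about $(\widehat{C},\overline{G},B)$ is equivalent to the corresponding statement about the Archimedean triple $(C_\phi,\overline{G},S_\varphi)$. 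To conclude, I would introduce the auxiliary Archimedean exchangeable model with survival copula $C_\phi$ and marginal survival function $\overline{G}$: by Remark~\ref{rem:B-Archimedean} its bivariate ageing function is $S_{\tilde\varphi}$ with $\tilde\varphi(u)=\phi(\overline{G}(-\log u))$, and by Proposition~\ref{prop:G=phi-varphi-1}(iii) one has $\tilde\varphi=\varphi$. Proposition~\ref{prop:PKD-BAS-SPIZ} (together with Remark~\ref{AveDort04-BasSpiz05a}) applied to this auxiliary model delivers items (i)--(vi) with $C_\phi$ and $S_\varphi$ in place of $\widehat{C}$ and $B$, and transporting PKD/NKD back across the two Kendall-equivalences yields the desired conclusion for the original model.

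The main obstacle in this scheme is really concentrated in Step~1: the passage from $(\widehat{C},B)$ to their Archimedean Kendall-representatives $(C_\phi,S_\varphi)$ while preserving the exact link with the marginal $\overline{G}$. Everything else is a formal translation. This link is exactly what Proposition~\ref{prop:G=phi-varphi-1}(iii) delivers, and it rests on the formula~(\ref{KB-in-terms-KC-e-gamma}) of Proposition~\ref{prop:K-di-C-hat---K-di-B}, which is where the regularity assumptions (continuity of~$g$, differentiability of~$\phi$) enter; once this bridge is in place, no further manipulation of IFRA/DFRA or PKD/NKD conditions is needed, since the Archimedean case is already handled by Proposition~\ref{prop:PKD-BAS-SPIZ}.
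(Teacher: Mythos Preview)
Your proposal is correct and follows essentially the same route as the paper: the proof there is obtained ``by collecting the results in Proposition~\ref{prop:G=phi-varphi-1}, Remark~\ref{AveDort04-BasSpiz05a}, and Proposition~\ref{prop:PKD-BAS-SPIZ} itself,'' which is precisely your scheme of passing to the Archimedean Kendall-representatives $(C_\phi,S_\varphi)$ via Proposition~\ref{prop:G=phi-varphi-1}, transporting PKD/NKD across the Kendall-equivalence classes (Remark~\ref{AveDort04-BasSpiz05a}), and then invoking the Archimedean result. Your explicit construction of the auxiliary Archimedean model with survival copula $C_\phi$ and marginal $\overline{G}$, together with the observation $\tilde\varphi=\varphi$, just spells out in detail what the paper leaves implicit.
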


In the proof of the above Proposition~\ref{prop:PKD-NAP-SPIZ} we have used the result of~\cite{BasSpiz05a}, and recalled in Proposition~\ref{prop:PKD-BAS-SPIZ}.  In order to get a self-contained proof, instead of Proposition~\ref{prop:PKD-BAS-SPIZ}, we can use the following Lemma~\ref{lemma:G=H1-H2} concerning the IFRA/DFRA property of three survival functions $\overline{G}$, $\overline{H}_1$, and~$\overline{H_2}$. Indeed,  in the spirit of item g) in the previous Section~\ref{sec:path}, and taking into account that the generalized Kendall distribution of pseudo-Archimedean semi-copulas coincides with the Kendall distribution of true Archimedean semi-copulas, we only need  to apply Lemma~\ref{lemma:G=H1-H2} to $\overline{H}_1=\phi^{-1}$ and $\overline{H}_2=\varphi^{-1}$.
\begin{lemma}\label{lemma:G=H1-H2}
Let  $\overline{H}_1(x)$ and $\overline{H}_2(x)$ be two continuous survival functions
  strictly decreasing,  strictly positive over $[0,+\infty)$, and   with $\overline{H}_1(0)=\overline{H}_2(0)=1$, and with the same support, i.e.,
  $$
  \sup\{x:\; \overline{H}_1(x)>0\}=\sup\{x:\; \overline{H}_1(x)>0\}.
  $$
Let $\overline{G}(x)$ be  the survival function defined by
\begin{equation}\label{eq:lemma:G=H1-H2}
\overline{G}(x):=\overline{H}_1\big(\overline{H}_2^{-1}(e^{-x})\big).
\end{equation}
Then
\begin{enumerate}
\item[(i)] If  $\overline{G}$ is \emph{IFRA} and $\overline{H}_1$ is \emph{DFRA}
then   $\overline{H}_2$ is \emph{DFRA}  %
\item[(ii)] If $\overline{G}$ is \emph{DFRA} and
$\overline{H}_1$ is \emph{IFRA}
then  $\overline{H}_2$ is \emph{IFRA}
\item[(iii)] If  $\overline{H}_2$ is \emph{DFRA}
and $\overline{G}$ is \emph{DFRA}    then $\overline{H}_1$ is \emph{DFRA}
\item[(iv)] If  $\overline{H}_2$ is \emph{IFRA} 
and $\overline{G}$ is \emph{IFRA}
then $\overline{H}_1$ is \emph{IFRA}
\item[(v)]  If $\overline{H}_1$ is \emph{DFRA}
and
$\overline{H}_2$ is \emph{IFRA}  %
 then
$\overline{G}$ is \emph{DFRA}
\item[(vi)] If $\overline{H}_1$ is \emph{IFRA}
and $\overline{H}_2$ is \emph{DFRA}
then
$\overline{G}$ is \emph{IFRA}
\end{enumerate}
\end{lemma}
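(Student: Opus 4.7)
The plan is to take logarithms and reduce the lemma to elementary facts about products and quotients of strictly positive monotone functions. Define
\[
R(x) := -\log \overline{G}(x), \qquad R_i(y) := -\log \overline{H}_i(y), \quad i=1,2.
\]
Under the standing hypotheses each of $R, R_1, R_2$ is continuous and strictly increasing on $[0,\infty)$, vanishes at $0$, and is strictly positive on $(0,\infty)$; moreover $R_2$ is a homeomorphism of $[0,\infty)$ onto itself (using the common-support assumption and strict positivity, which is precisely what makes the composition in~(\ref{eq:lemma:G=H1-H2}) well defined on the full half-line). Taking $-\log$ of~(\ref{eq:lemma:G=H1-H2}) gives $R(x) = R_1\!\left(R_2^{-1}(x)\right)$, or equivalently, via the substitution $x = R_2(y)$,
\[
R(R_2(y)) = R_1(y), \qquad y \geq 0.
\]

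By Definition~\ref{def:IFRA-DFRA}, $\overline{G}$ is IFRA (resp.\ DFRA) iff $x \mapsto R(x)/x$ is increasing (resp.\ decreasing), and analogously for $\overline{H}_i$ with $R_i$. Substituting $x = R_2(y)$ and invoking the strict monotonicity of $R_2$, the condition on $\overline{G}$ becomes: $\overline{G}$ is IFRA (resp.\ DFRA) iff $y \mapsto R_1(y)/R_2(y)$ is increasing (resp.\ decreasing). I then introduce the strictly positive functions
\[
A(y) := \frac{R_1(y)}{R_2(y)}, \qquad B(y) := \frac{R_2(y)}{y}, \quad y > 0,
\]
which satisfy the key identity $R_1(y)/y = A(y)\,B(y)$. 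In this notation the six items of the lemma are translated into statements about these three quantities: $\overline{G}$ is IFRA (DFRA) iff $A$ is increasing (decreasing); $\overline{H}_2$ is IFRA (DFRA) iff $B$ is increasing (decreasing); $\overline{H}_1$ is IFRA (DFRA) iff $AB$ is increasing (decreasing).

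All six implications then follow from two elementary observations about strictly positive functions on $(0,\infty)$: (a) the product of two monotone functions of the same direction is monotone in that direction; (b) the quotient of two monotone functions of opposite direction is monotone in the direction of the numerator. For instance, item (i) assumes $A$ increasing and $AB$ decreasing; writing $B=(AB)/A$ and applying (b) yields $B$ decreasing, which is the claim. Items (iii) and (iv) deduce monotonicity of $AB$ from same-direction monotonicities of $A$ and $B$ and so use (a); items (ii), (v), (vi) solve for either $A$ or $B$ in the identity $R_1/y=AB$ and use (b).

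There is no genuine analytic obstacle: the only point to check is that $A$, $B$, $AB$ are strictly positive on $(0,\infty)$, so that the monotonicity rules for products and quotients apply, and this is immediate from the hypothesis that each $\overline{H}_i$ is strictly decreasing from~$1$ and strictly positive on $[0,\infty)$. The mild care needed in the write-up is merely to list the six elementary verifications in a uniform way, presumably by a single one-line table that records, for each item, the two factors of $R_1/y = A\cdot B$ that are being combined and whether rule (a) or rule (b) is invoked.
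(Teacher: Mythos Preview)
Your proof is correct and is essentially the same as the paper's: both take cumulative hazards $R=-\log\overline{G}$, $R_i=-\log\overline{H}_i$, obtain the composition identity $R_1(y)=R\big(R_2(y)\big)$, and factor $R_1(y)/y=\big(R_1(y)/R_2(y)\big)\cdot\big(R_2(y)/y\big)$ so that each of the three IFRA/DFRA conditions corresponds to monotonicity of one factor (or of the product), whence the six implications are read off via the product/quotient rules for positive monotone functions. The only cosmetic difference is that you name the factors $A$ and $B$ and state the two monotonicity rules explicitly, while the paper leaves them implicit.
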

\begin{proof}
In order to simplify the proof we deal only with the case when $\overline{H}_1$ and~$\overline{H}_2$ are strictly positive over $[0,+\infty)$.\\
As recalled in Definition~\ref{def:IFRA-DFRA}, the IFRA/DFRA property of a survival function~$\overline{H}$ is a property of  its  cumulative risk function, i.e., of the function
$$
R_{\overline{H}}(x)= - \log \big(\overline{H}(x)\big),
$$
namely that the function $R_{\overline{H}}(x)/x$ is increasing/decreasing.
With the above notation, and observing  that its inverse function is
$$
R^{-1}_{\overline{H}}(x)=\overline{H}^{-1}(e^{-x}),
$$
we can rewrite equality~(\ref{eq:lemma:G=H1-H2}) as
$$
R_{\overline{G}}(x):=R_{\overline{H}_1}\big(R_{\overline{H}_2}^{-1}(x)\big),\quad \text{or equivalently,} \quad R_{\overline{H}_1}(x)=R_{\overline{G}} \big(R_{\overline{H}_2}(x)\big).\
$$
Furthermore, we observe that
$$
\frac{R_{\overline{H}_1}(x)}{x} =\frac{R_{\overline{G}}\big(R_{\overline{H}_2}(x)\big)}{R_{\overline{H}_2}(x)}\, \frac{R_{\overline{H}_2}(x)}{x},
$$
and that the function
$$
\frac{R_{\overline{G}}\big(R_{\overline{H}_2}(x)\big)}{R_{\overline{H}_2}(x)}
$$
is increasing/decreasing if and only if $R_{\overline{G}}(x)/x$ is increasing/decreasing, i.e., if and only if $\overline{G}$ is IFRA/DFRA.
\\
Then the implications $(i)$---$(vi)$ follows immediately. For instance, when $G$ is IFRA,  $\overline{H}_1$ can be DFRA only when   $\overline{H}_2$ is DFRA.
%
\end{proof}
\begin{remark}
It is important to stress that  the simplified assumption that the two survival functions are strictly positive on $[0,+\infty)$ is not necessary: what is really important is that the two survival function are strictly positive and invertible on the the same set. Such  a property does indeed  hold when  $\overline{H}_1=\phi^{-1}$ and $\overline{H}_2=\varphi^{-1}$, with $\phi$ and $\varphi$ defined as in~(\ref{eq:phi-t-con-K-C-hat}) and (\ref{eq:varphi-t-con-K-C-hat}).  Moreover this property guarantees the desired condition that the function $\overline{G}(x)$ is strictly positive on $[0,+\infty)$.
\end{remark}

\section{Conclusions}\label{sec:conclusions}

For vectors of exchangeable non-negative random variables  with the meaning of
\textit{lifetimes}, we have considered properties of stochastic dependence
that are described in terms of the associated bivariate survival copula
$\widehat{C}$. Concerning the common, one-dimensional, marginal distribution
for the single variables, we have considered the so-called properties of
ageing, such as IFR/DFR, NBU/NWU, IFRA/DFRA,  which emerge in
different fields of applied probability.

    As it is well known, for a couple of random variables, any property of dependence as
above is generally compatible with any arbitrary choice of a one-dimensional
probability distribution. This is guaranteed by the Sklar Theorem (see, e.g.,~\cite{Nelsen2006}, ~\cite{DurSemBook2016}).
However, compatibility may fail if we assume   some extra condition on the joint probability
distribution.

For exchangeable lifetimes, some papers related with such a theme have been
published in the past years. From a modeling and subjective-probability viewpoint, the interest
toward this field had been initially motivated by the effort to extend the
property of \emph{lack of memory} of exponential distributions to the case of
a vector of dependent variables (see in particular~\cite{BarSpiz93}). A further, related, issue was the circumstance
that positive ageing properties for a family of conditional distributions can
go lost under the operation of unconditioning, as recalled in Section~\ref{section:IFR-DFR}
above.

From an  analytical  stand-point, a
suggestion was found in the results
showing  equivalence relations between dependence properties of an Archimedean copula and
ageing properties of its generator (see in particular~\cite{AveDort04}, \cite{MulScar05}).
Taking these results into account, a study of relations between
the form of marginal distributions and dependence properties of survival
copulas was developed in~\cite{BasSpiz05a}  for exchangeable survival models. Along
such a study the equivalence relations of~\cite{AveDort04}  have been extended to
obtain different types of implications between dependence and ageing, by also
introducing appropriate concepts of bivariate ageing.

Such a study had started from the observation (see also~\cite{BarSpiz93}) that   some
properties related to ageing, for a vector of exchangeable lifetimes $(X_{1},...,X_{n})$, can be translated into properties of the family of
level curves of the joint survival function of $(X_{1},...,X_{n})$. As a main tool for the description of the family of level curves,
   the ageing function $B$  was introduced. In general,
$B$ turns out to be a semi-copula. In~\cite{BasSpiz05a} it  was observed that also the
common marginal survival function $\overline{G}$ of $X_{1},...,X_{n}$ can be described
by the (Archimedean) semi-copula $S_{\overline{G}^{-1}}$.   It was observed
furthermore that univariate and bivariate properties of ageing can be
respectively characterized in terms of properties of  $\overline{G}$ and $S_{\overline
{G}^{-1}}$, provided notions of dependence are extended from copulas to
semi-copulas. This approach allowed the relations among univariate ageing,
bivariate ageing, and stochastic dependence for $( X_{1},...,X_{n})$ to be seen as relations among the dependence properties of the
three semi-copulas  $B$, $S_{\overline{G}^{-1}}$, and $\widehat{C}$ (actually
$\widehat{C}$ is always a copula). In the study of such relations,
the original equivalence relations
presented in~\cite{AveDort04}  have an inspiring role, as mentioned above.
On the other hand, in a sense, the  relations among semi-copulas can be seen as an
extension  since they convert equivalence relations into implications
by allowing properties of $B$  to enter into the game.
We point out that the mentioned results in~\cite{AveDort04} can be re-obtained by imposing the condition $B(u,v)=uv$ (i.e., Schur-constant models).

As a main motivation  of the present work, a review of this approach has been given. Along such a review we made an effort to demonstrate the logic underlying such an approach. On this purpose we  have singled out the different conceptual steps leading to  results of the type obtained in~\cite{BasSpiz05a}. We have also determined a class of  dependence properties  (\ref{C-POD-Pi})  and  corresponding ageing properties  (\ref{G-B-POD-Pi}) for which this kind of results might also hold true.
A further motivation can be found in the need to enrich the analysis of the relations between the ageing notions of  IFRA/DFRA the
dependence concepts of  PKD/NKD. On this purpose, we introduced the definition of generalized Kendall distribution   and of Kendall equivalence classes for semi-copulas. In this respect we have  pointed out the role of survival models, more general that Archimedean ones, that we called
\textquotedblleft Pseudo-Archimedean\textquotedblright.
On this basis,  the analysis of   relations between   IFRA/DFRA   and  PKD/NKD properties started in~\cite{BasSpiz05a} for the Archimedean case has been extended to the   Pseudo-Archimedean  case.  A relevant tool for such a result is  the characterization (see~\cite{GenRiv01}, and \cite{NelAl09}) of the class of  copulas that share the Kendall distribution with an Archimedean copula. We extended such a characterization to those semi-copulas  which are bivariate ageing functions of Pseudo-Archimedean models.

In this respect we also recall attention to the result, proved in~\cite{NelAl09}, showing that each Kendall equivalence class of copulas is characterized by a unique  associative copula.
Such a copula turns out to be Archimedean under the condition that we have recalled in the proof of Proposition~\ref{prop:G=phi-varphi-1}.
The result in~\cite{NelAl09} suggests a way out to  the  problem of further extending our own result from pseudo Archimedean models to general exchangeable ones.
\\
\indent  Different other types of developments and still open problems may be also suggested by
the arguments discussed in the previous sections.

One can first cite the problem of proving the extension of the analysis to multivariate notions of
dependence and of ageing as considered in~\cite{DurFosSpiz10}.

Also worth of further analysis may be the possible connections with problems
in Risk Theory. In the papers~\cite{Spreeuw10}   and~\cite{FosSpiz12} the
risk-related properties of a single-attribute utility function have been
respectively related to the dependence properties of an Archimedean copula and
to the one-dimensional ageing property of a survival function. Some conclusion
of potential interest may arise from application to these topics of the above
arguments. In~\cite{CerSpiz13}, on the other hand, the extension to
semi-copulas of dependence properties revealed also helpful in the analysis of
the \textit{mean--variance model}. This topic  recalls the attention on the
interest  of considering our approach to non-exchangeable models.

 A related problem concerns with a potential
 description of the   level sets family for
non-exchangeable, multivariate  models by means of semi-copulas. In this direction, a potentially
fruitful method can be based on replacing the function~$B$  --- which is defined
in terms of the marginal survival function $\overline{G}$ --- with a similar
function defined in terms of the trace on the diagonal.
In this respect, see also~\cite{DurMesSem06}, \cite{DurEtAl07}.
\bigskip

\noindent\textbf{Acknowledgements}
The  Authors acknowledge partial support of Ateneo Sapienza Research Projects
 \emph{Dipendenza, disuguaglianze e approssimazioni in modelli stocastici  (2015)},
 \emph{Processi stocastici: Teoria e applicazioni  (2016)},
 \emph{Simmetrie e Disuguaglianze in Modelli Stocastici  (2018)}.

\end{document}